\newsavebox{\@brx}
\newcommand{\llangle}[1][]{\savebox{\@brx}{\(\m@th{#1\langle}\)}%
  \mathopen{\copy\@brx\mkern2mu\kern-0.8\wd\@brx\usebox{\@brx}}}
\newcommand{\rrangle}[1][]{\savebox{\@brx}{\(\m@th{#1\rangle}\)}%
  \mathclose{\copy\@brx\mkern2mu\kern-0.8\wd\@brx\usebox{\@brx}}}
\newtheorem{theorem}{Theorem}[section]
\newtheorem{lemma}[theorem]{Lemma}
\newtheorem{claim}[theorem]{Claim}
\newtheorem{proposition}[theorem]{Proposition}
\newtheorem{corollary}[theorem]{Corollary}
\renewcommand{\leq}{\leqslant}
\renewcommand{\geq}{\geqslant}
\newtheorem{rmk}[theorem]{\normalfont{\em{Remark}}}
\newtheorem{rmks}[theorem]{\normalfont{\em{Remarks}}}
\renewcommand*\env@matrix[1][\arraystretch]{%
\edef\arraystretch{#1}%
\hskip -\arraycolsep
\let\@ifnextchar\new@ifnextchar
\array{*\c@MaxMatrixCols c}}
\title{Undecidability of Epimorphisms  onto products of hyperbolic groups}
\author{Konstantinos Tsouvalas}
\begin{document}
\maketitle

\begin{abstract} We exhibit examples of finitely presented subgroups $P$ of direct products of hyperbolic groups for which there is no algorithm that detects whether a finitely presented group has a quotient isomorphic to $P$. For any torsion-free, linear, hyperbolic group $Q$ that maps onto the free group of rank $2$ and $m\geq 2$, we construct a recursive sequence $(\Gamma_n)_{n\in \mathbb{N}}$ of torsion-free, hyperbolic $C'(\frac{1}{6})$ small cancellation groups, with the property that there is no algorithm determining the values $n\in \mathbb{N}$ such that $\Gamma_n$ has a quotient isomorphic to the direct product $Q^{m}$ of $m$-copies of $Q$.  \end{abstract}

\section{Introduction}  \label{introduction}
The isomorphism problem is undecidable for the class of all finitely presented groups, as established by the Adian--Rabin theorem \cite{A,R}. In contrast, Sela \cite{Sela-iso} (see also \cite{Dahmani-Groves}) proved that the isomorphism problem is decidable among the class of all torsion-free  hyperbolic groups.  For the class of all hyperbolic groups, including those  with torsion elements, the isomorphism problem is decidable by \cite{Dahmani-Guirardel}. However, several other group theoretic properties have been proved to be undecidable among the class of hyperbolic groups; see for instance, \cite{BMS}, \cite[Cor. 3.8]{BO} and \cite[Thm. D]{Bridson-Wilton-triviality}. Bridson--Wilton \cite[Thm. A]{Bridson-Wilton-triviality} proved that there is no algorithm that, given a finite presentation, determines whether the group (defined by the presentation) admits a non-trivial finite quotient. The question whether such an algorithm exists among the class of torsion-free hyperbolic groups remains open and is, in fact,  equivalent to the existence of a \hbox{non-residually finite hyperbolic group \cite[Thm. 9.6]{Bridson-Wilton-triviality}.} For certain collections $\mathcal{P}$ of finite groups, the problem of deciding whether a finitely presented group has a quotient in $\mathcal{P}$ has been studied in \cite{BELS, Plesken} with both positive and negative results. 

In this article, we exhibit examples of infinite, finitely presented groups $P$ for which there is no algorithm that detects whether a finitely presented group has a quotient isomorphic to $P$.  For a group $G$ and $m\geq 2$, denote by $G^m$ the direct product of $m$-copies of $G$. For two groups $G_1$ and $G_2$, denote by $\mathsf{Epi}(G_1,G_2)$ the set of epimorphisms from $G_1$ to $G_2$; if $\mathsf{Epi}(G_1,G_2)$ is non-empty we say that $G_1$ maps onto $G_2$. A group $G$ is called {\em residually solvable} if for every non-trivial element $g\in G$ there is a solvable group $S$ and a homomorphism $\phi:H\rightarrow S$ with $\phi(g)\neq 1$.  Our first result is the following.

\begin{theorem} \label{mainthm2} Let $Q$ be a finitely presented, virtually residually solvable group that maps onto the free group of rank $2$, and the centralizer of any non-cyclic free subgroup of $Q$ is trivial. Fix $m \geq 2$ an integer. There is a finite set $Z$ and a recursive sequence $(\mathcal{R}_n)_{n\in \mathbb{N}}$ of finite subsets of words in $Z^{\pm 1}$, of cardinality independent of $n$, with the following properties:
\begin{enumerate}\item for every $n\in \mathbb{N}$ the group $$\Gamma_n=\big \langle Z\ |\ \mathcal{R}_n \big \rangle$$ is torsion-free, hyperbolic and satisfies the $C'(\frac{1}{6})$ small cancellation condition.

\item the set $$\big\{n\in \mathbb{N}: \mathsf{Epi}(\Gamma_n,	Q^{m})\neq \emptyset \big\}$$ is recursively enumerable but not recursive.\end{enumerate} \end{theorem}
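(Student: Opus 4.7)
The plan is to encode an undecidable problem into the existence of epimorphisms $\Gamma_n\twoheadrightarrow Q^m$, via a uniform $C'(\frac{1}{6})$ small cancellation construction. Recursive enumerability is the easy half: for each $n$, one enumerates all candidate assignments of the generators in $Z$ to tuples from a fixed generating set of $Q^m$, and in parallel enumerates witnesses that such an assignment defines a surjective homomorphism; since $Q$ (hence $Q^m$) is finitely presented, both ``the image of each $r\in\mathcal{R}_n$ is trivial in $Q^m$'' and ``every generator of $Q^m$ lies in the image'' are recursively enumerable, so $\{n:\mathsf{Epi}(\Gamma_n,Q^m)\neq \emptyset\}$ is r.e.

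For non-recursiveness, I would begin with a recursive sequence $(H_n)_n$ of finite presentations for which a suitable property $\mathcal{P}$ --- for instance ``$H_n$ admits an epimorphism onto $F_2$'', or a marked triviality condition --- is r.e.~but not recursive, as provided by Adian--Rabin or its hyperbolic refinements in the style of Bridson--Wilton. The group $\Gamma_n$ is then built by combining the presentation of $H_n$ with $2m$ distinguished free generators $a_1,b_1,\ldots,a_m,b_m$, together with a fixed number of long connecting relators engineered so that (i) $\Gamma_n$ satisfies $C'(\frac{1}{6})$, hence is torsion-free hyperbolic, regardless of $n$; (ii) the variable data of $n$ is absorbed into the \emph{lengths} of a bounded number of relators, keeping $|\mathcal{R}_n|$ independent of $n$; and (iii) $\mathsf{Epi}(\Gamma_n,Q^m)\neq \emptyset$ if and only if $H_n$ has property $\mathcal{P}$. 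The forward direction of~(iii) is exhibited by hand: if $H_n$ surjects onto $F_2$, compose with the fixed map $Q\twoheadrightarrow F_2$ and use the pairs $(a_i,b_i)$ to define $m$ epimorphisms $\Gamma_n\twoheadrightarrow Q$ whose joint map covers all of $Q^m$.

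The converse direction of~(iii) is where the hypotheses on $Q$ are essential, and is the main obstacle. Given an epimorphism $\phi=(\phi_1,\ldots,\phi_m)\colon \Gamma_n\twoheadrightarrow Q^m$, the trivial-centralizer hypothesis on non-cyclic free subgroups of $Q$ rigidifies the components: no two $\phi_i$ can agree up to an inner automorphism of $Q$ in a way that degenerates the product (a Goursat-style argument), so the $m$-fold target is genuinely seen. The virtual residual solvability of $Q$ is then used to track the images of elements originating in the $H_n$-portion of $\Gamma_n$: if $\mathcal{P}$ fails for $H_n$, sufficiently deep solvable quotients of $Q^m$ detect the obstruction and contradict surjectivity. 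Small cancellation control, via Greendlinger's lemma and the resulting linear isoperimetric inequality, is crucial to ensure that the normal closure of the connecting relators does not admit ``parasitic'' epimorphisms bypassing the $H_n$-structure. The principal difficulty is precisely this rigidity analysis: aligning the combinatorial geometry of the $C'(\frac{1}{6})$ quotient with the algebraic hypotheses on $Q$ so that every epimorphism onto $Q^m$ genuinely decodes $\mathcal{P}$ for $H_n$, at which point the reduction $n\mapsto \Gamma_n$ transfers the non-recursiveness of $\mathcal{P}$ to the epimorphism set.
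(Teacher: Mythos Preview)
Your outline has the right architecture---encode an undecidable seed, pass through a uniform small cancellation presentation, and analyse epimorphisms to $Q^m$---but several of the load-bearing steps are either backwards or absent.

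First, the forward direction of (iii) does not work as written. Composing $H_n\twoheadrightarrow F_2$ with $Q\twoheadrightarrow F_2$ does not yield a map into $Q$; the arrows point the wrong way. In the paper the surjection $\theta:Q\twoheadrightarrow F_2$ is used not to build the epimorphism onto $Q^m$ but to \emph{pull back} Miller's undecidable family $\Sigma_n\subset F_2$ to subsets $\Sigma_n'\subset F(X)$ of the free group on the generators of $Q$, and later inside the rigidity lemma to project down to $F_2$ and exploit that its subgroups are either cyclic or non-abelian free.

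Second, you have misplaced the two structural hypotheses on $Q$. Virtual residual solvability is \emph{not} used to ``detect the obstruction in deep solvable quotients of $Q^m$''; it is used exactly once, in a tailored Rips construction, to force the $2$-generated kernel $N=\langle x,y\rangle$ to satisfy $N=[N,N]=\llangle y^{p!}\rrangle$ and hence map trivially into any virtually residually solvable target. This is what guarantees that every homomorphism $\Gamma_n\to Q^m$ factors through an intermediate, non-hyperbolic group $W_n$. Likewise, small cancellation plays no role in the rigidity analysis: Greendlinger's lemma is invoked only to certify that $\Gamma_n$ is torsion-free hyperbolic, nothing more.

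Third, and most importantly, the actual mechanism deciding $\mathsf{Epi}(\cdot,Q^m)\neq\emptyset$ is missing from your sketch. The paper constructs the intermediate group
\[
W_{\Sigma,k}=\big\langle X,\ \{y_{s\sigma}\}\ \big|\ \mathcal{R},\ [\sigma^{-1}y_{s\sigma},y_{s\sigma'}],\ [y_{s\sigma},y_{t\sigma'}]\ (s\neq t)\big\rangle,
\]
and proves, by an induction using that $Q$ is Hopfian together with the trivial-centralizer hypothesis and the fact that $\langle X\mid\mathcal{R}\cup\Sigma\rangle$ has zero first Betti number, that $W_{\Sigma,k}\twoheadrightarrow Q^{k+1}$ if and only if $\langle X\mid\mathcal{R}\cup\Sigma\rangle$ is trivial. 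Your Goursat-style remark gestures at this but does not supply it; in particular, you never invoke Hopfianness of $Q$, which is what prevents a single copy of $Q$ from surjecting onto $Q\times Q$ and is indispensable at each step of the induction. Without this lemma (or a genuine substitute) there is no reduction, and the proposal is a strategy rather than a proof.
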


 Since finitely generated linear groups are virtually residually nilpotent \cite{Platonov}, Theorem \ref{mainthm2} shows that there is no algorithm that detects whether a torsion-free hyperbolic group maps onto  $Q^m$, $m\geq 2$, where $Q$ is a linear, torsion-free hyperbolic group that maps onto the free group of rank $2$. In fact, we obtain the following slightly more general statement.

\begin{theorem} \label{maincor}  Let $Q$ be a torsion-free, hyperbolic group which maps onto the free group of rank $2$. For any $m\geq 2$, there is no algorithm that decides whether a finitely presented group has a quotient isomorphic to $Q^{m}$. \end{theorem}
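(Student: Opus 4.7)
My plan is to apply Theorem \ref{mainthm2} to the free group $F_2$ and then transfer the resulting undecidability from $F_2^m$ to $Q^m$. First I verify that $F_2$ satisfies the hypotheses of Theorem \ref{mainthm2}: it is finitely presented and, being linear, is virtually residually nilpotent by \cite{Platonov} and in particular virtually residually solvable; it trivially maps onto itself; and the centralizer of any non-cyclic free subgroup of $F_2$ is trivial, since centralizers of non-trivial elements in a torsion-free hyperbolic group are cyclic, while any non-cyclic free subgroup contains two non-commuting elements whose cyclic centralizers intersect trivially. Applying Theorem \ref{mainthm2} then yields a recursive sequence $(\Gamma_n)_{n\in\mathbb{N}}$ of torsion-free, hyperbolic $C'(\frac{1}{6})$ small-cancellation groups such that $\{n : \mathsf{Epi}(\Gamma_n, F_2^m)\neq\emptyset\}$ is recursively enumerable but not recursive.

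The main step is to establish, for every torsion-free hyperbolic $Q$ mapping onto $F_2$, the equivalence $\mathsf{Epi}(\Gamma_n, Q^m)\neq\emptyset$ if and only if $\mathsf{Epi}(\Gamma_n, F_2^m)\neq\emptyset$. The forward implication is immediate by post-composing any epimorphism $\Gamma_n \twoheadrightarrow Q^m$ with the coordinatewise surjection $\pi^m : Q^m \twoheadrightarrow F_2^m$ induced by a fixed epimorphism $\pi : Q\twoheadrightarrow F_2$. For the reverse implication, given $\phi : \Gamma_n\twoheadrightarrow F_2^m$, I would attempt to construct a lift $\psi : \Gamma_n\to Q^m$ of $\phi$ along $\pi^m$ by selecting, for each generator $z\in Z$ of $\Gamma_n$, an arbitrary preimage in $Q^m$ of $\phi(z)$, and then verifying that the relators $\mathcal{R}_n$ are satisfied in $Q^m$. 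The key point would be that, by the small-cancellation form of the $\Gamma_n$ supplied by Theorem \ref{mainthm2}, the relators are combinatorially rigid enough that this verification reduces to checking an identity in the common quotient $F_2^m$, which is automatic because $\phi$ is a homomorphism.

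The principal obstacle is precisely this lifting step: in general, lifting $\phi : \Gamma_n\to F_2^m$ along $\pi^m$ carries a non-trivial cohomological obstruction because $\pi$ need not admit a section. If the direct lift fails, the fallback is to re-run the construction of $(\Gamma_n)$ inside the proof of Theorem \ref{mainthm2} with the ambient target $Q$ in place of $F_2$: the hypothesis on the centralizer of non-cyclic free subgroups is automatic for any torsion-free hyperbolic $Q$ (by the cyclic centralizer argument above), and one then needs to trace where virtual residual solvability is used in the proof of Theorem \ref{mainthm2} and replace it by a weaker consequence of the fact that $Q$ admits $F_2$ (itself residually free, hence residually solvable) as a quotient. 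Either route culminates in the same final observation: the set $\{n : \mathsf{Epi}(\Gamma_n, Q^m)\neq\emptyset\}$ is recursively enumerable but not recursive, which precludes any algorithm deciding whether a finitely presented group has a quotient isomorphic to $Q^m$.
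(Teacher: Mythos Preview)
Your lifting step is a genuine gap. There is no mechanism by which the $C'(\tfrac16)$ form of the relators $\mathcal{R}_n$ would force an arbitrary choice of preimages in $Q^m$ to satisfy them; small cancellation controls the geometry of the Cayley complex of $\Gamma_n$, not the behaviour of maps into an unrelated target, and the obstruction to lifting along $\pi^m:Q^m\to F_2^m$ is exactly the one you name. So the ``main step'' does not go through, and the equivalence $\mathsf{Epi}(\Gamma_n,Q^m)\neq\emptyset\Leftrightarrow\mathsf{Epi}(\Gamma_n,F_2^m)\neq\emptyset$ is unproved (and likely false for general $n$).

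Your fallback points in the right direction but misdiagnoses where virtual residual solvability enters. In the proof of Theorem~\ref{mainthm2}, that hypothesis is used \emph{only} in the Rips step (Proposition~\ref{Rips0}) that upgrades the finitely presented witnesses $W_{\Sigma_n'}$ to hyperbolic witnesses $\Gamma_n$. Theorem~\ref{maincor} asks nothing about hyperbolicity of the input groups, so that step is irrelevant. The paper's argument is simply to run Lemma~\ref{mainlemma} directly with the given $Q$: a torsion-free hyperbolic group is Hopfian \cite{Sela} and has trivial centralizer for every non-cyclic free subgroup (as you checked), so Lemma~\ref{mainlemma} applies and gives, for the recursive sequence $(\Sigma_n')$ pulled back from Miller's sets, that $\mathsf{Epi}(W_{\Sigma_n'},Q^{m})\neq\emptyset$ iff $\langle X\mid\mathcal{R}\cup\Sigma_n'\rangle\cong\{1\}$. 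The latter set of $n$ is r.e.\ but not recursive, and the $W_{\Sigma_n'}$ are finitely presented; this already yields Theorem~\ref{maincor}. In short: drop the detour through $F_2$ and through the Rips construction, and invoke Lemma~\ref{mainlemma} for $Q$ itself.
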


Recall that a group $G$ is called of type $\mathcal{F}_n$ if it admits a classifying space with finite $n$-skeleton. The second main result of the present article provides further examples of finitely presented groups $P$ for which there is no algorithm that determines whether a finitely presented group has a quotient isomorphic to $P$.

\begin{theorem}\label{fiberprod} Let $G=\langle X \ | \ \mathcal{S}\rangle$ be an one-ended group of type $\mathcal{F}_3$ with unsolvable word problem and without finite quotients. Let $1\rightarrow N \rightarrow \Gamma {\rightarrow} G\ast G \rightarrow 1$ be a short exact sequence, where $N$ is finitely generated and $\Gamma$ is a torsion-free, linear, hyperbolic group. Let $\pi:\Gamma \rightarrow G\ast G$ be the projection with kernel $N$ and $P$ be the fiber product, $$P:=\big\{(\gamma_1,\gamma_2)\in \Gamma\times \Gamma: \pi(\gamma_1)=\pi(\gamma_2)\big\}.$$  There exists a finite set $Y$ and a recursive sequence of finite subsets $\big(\mathcal{S}_n)_{n\in \mathbb{N}}$, of cardinality independent of $n$, such that $\Delta_n:=\langle Y \ | \ \mathcal{S}_n\rangle$ is a torsion-free $C'(\frac{1}{6})$ small cancellation group and the set $$\big\{n\in \mathbb{N}:\mathsf{Epi}(\Delta_n,P)\neq \emptyset\big\}$$ is recursively enumerable but not recursive. 
\end{theorem}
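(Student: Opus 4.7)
The plan is to reduce the (unsolvable) word problem of $G$ to the epimorphism problem onto $P$, following the small-cancellation construction template of Theorem~\ref{mainthm2} but adapted to the fiber product target. Enumerate words $(w_n)_{n\in\mathbb{N}}$ in the alphabet $X^{\pm 1}$ so that $\{n:w_n=_G 1\}$ is recursively enumerable but not recursive; such an enumeration exists because the word problem of $G$ is unsolvable. The goal is to arrange $\mathsf{Epi}(\Delta_n,P)\neq\emptyset$ if and only if $w_n=_G 1$. The underlying algebraic observation driving the reduction is that, for any word $w\in X^*$ lifted to $\tilde w\in\Gamma$ through the inclusion $G\hookrightarrow G\ast G$ of either free factor and the surjection $\pi:\Gamma\twoheadrightarrow G\ast G$, the pair $(\tilde w,1)\in\Gamma\times\Gamma$ lies in $P$ precisely when $\pi(\tilde w)=1$ in $G\ast G$, equivalently when $w=_G 1$.

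The construction of $\Delta_n$ will parallel the construction of $\Gamma_n$ in Theorem~\ref{mainthm2}. Fix a finite set $Y$ containing letters intended to map, in the desired epimorphism, to a generating set of $P\subset\Gamma\times\Gamma$ that realises its fiber-product structure (for instance, two labelled copies of generators for $\Gamma$ together with an auxiliary letter $c$). The relator set $\mathcal{S}_n$ is defined by a fixed bounded-cardinality template in which the word $w_n$ is substituted into one designated slot, the remaining positions being filled by long generic Rips-style decorations. The padding secures the $C'(\frac{1}{6})$ small cancellation condition uniformly in $n$, and hence the torsion-freeness and hyperbolicity of $\Delta_n$. The substituted slot is engineered so that, under any homomorphism $\phi:\Delta_n\to P$, the image $\phi(c)$ is forced to equal $(\tilde w_n,1)\in\Gamma\times\Gamma$; such an image exists in $P$ only when $(\tilde w_n,1)\in P$, i.e.\ when $w_n=_G 1$.

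For the two-way analysis of $\mathsf{Epi}(\Delta_n,P)$: if $w_n=_G 1$, construct an explicit epimorphism by mapping the two labelled copies of generators to the standard generators of $P$ arising from $\Gamma\times\{1\}\cup\{1\}\times\Gamma$ together with the diagonal $\Gamma\hookrightarrow P$, and setting $\phi(c)=(\tilde w_n,1)$, which lies in $P$ by assumption; the template's remaining relators are designed to be satisfied automatically under this assignment. Conversely, if $w_n\neq_G 1$, any homomorphism $\phi:\Delta_n\to P$ must violate the template relator forcing $\phi(c)=(\tilde w_n,1)$, since this element is not in $P$. The rigidity needed to pin $\phi(c)$ down to this specific value will use the structural hypotheses: $G$ has no finite quotients (which excludes homomorphisms with finite image in $G\ast G$), $G$ is one-ended of type $\mathcal{F}_3$ (ensuring via the 1-2-3 theorem of Baumslag--Bridson--Miller--Short that $P$ is finitely presented with controlled algebraic structure), and the linearity and hyperbolicity of $\Gamma$ rigidify the coordinate projections $\Delta_n\to\Gamma$.

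The principal obstacle will be establishing the reverse direction: showing that $\Delta_n$ admits no epimorphism onto $P$ when $w_n\neq_G 1$. Small cancellation groups typically admit many homomorphisms into any fixed target, so the delicate point is designing the template so that any $\phi:\Delta_n\to P$ is genuinely forced to satisfy $\phi(c)=(\tilde w_n,1)$, rather than some other element of $P$ satisfying the relator by accident. This requires exploiting the algebraic rigidity of $\Gamma$ and the residual properties of $G$ to exclude degenerate homomorphisms (e.g.\ those factoring through the abelianisation or through a proper quotient), all while keeping $|\mathcal{S}_n|$ bounded independently of $n$---the interaction between the small cancellation constraint and the rigidity analysis is the crux of the argument.
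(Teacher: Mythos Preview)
Your high-level reduction---encode words $w_n$ of $G$ into a recursive family and arrange $\mathsf{Epi}(\Delta_n,P)\neq\emptyset\iff w_n=_G 1$---matches the paper's, but the mechanism you propose has a genuine gap. You assert that the template can be ``engineered so that, under any homomorphism $\phi:\Delta_n\to P$, the image $\phi(c)$ is forced to equal $(\tilde w_n,1)$.'' Relators can only impose \emph{relations} among images, never pin an image to a prescribed element of the target; even when $w_n\neq_G 1$ there is nothing preventing $\phi$ from sending $c$ to some other element of $P$ that satisfies the same relator. You acknowledge this as the ``principal obstacle'' but supply no device to overcome it, and the Rips-style padding you invoke only buys small cancellation---it does nothing to rigidify homomorphisms into $P$.

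The paper's argument is structurally different and does not attempt to pin down any single letter. It first builds an intermediate (non-$C'(\frac16)$) group $\mathcal{G}_w$ containing two commuting normal copies $N^{\textup L},N^{\textup R}$ of a generating set for $N$, a letter $t$, and the relator $[\widetilde w t,t]$ with $\widetilde w=w(x_1,\dots,x_r)\,w(x_{r+1},\dots,x_{2r})^{-1}$. The design yields $\mathcal{G}_w/\llangle N^{\textup L},\widetilde w t\rrangle\cong\Gamma\cong\mathcal{G}_w/\llangle N^{\textup R},t\rrangle$. Given an epimorphism $\varphi:\mathcal{G}_w\to P$, residual finiteness of $P$ (from linearity of $\Gamma$) together with the absence of finite quotients of $G$ forces the images of $N^{\textup L},N^{\textup R}$ to be non-trivial; centralizer considerations in the hyperbolic group $\Gamma$ and Hopfianity of $\Gamma$ then force $\varphi$ to respect the $N\times N$ structure, so $\varphi$ descends to an isomorphism $\mathcal{G}_w/\llangle \mathsf H_{\textup L},\mathsf H_{\textup R}\rrangle\to P/(N\times N)$, i.e.\ $G\ast_{w=w}G\cong G\ast G$. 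A short Kurosh argument (using that $G$ is one-ended, hence freely indecomposable) gives $w=_G 1$. Only \emph{after} this algebraic claim is the Rips algorithm of Proposition~\ref{Rips0} applied to $\mathcal{G}_w$, with target $\Lambda=P$ (virtually residually nilpotent since $\Gamma$ is linear), to produce the $C'(\frac16)$ group $\Delta_w$ whose homomorphisms to $P$ all factor through $\mathcal{G}_w$. The hypotheses you correctly identified (no finite quotients, linearity, one-endedness, hyperbolicity) are all used, but they drive a commuting-subgroups\,/\,Hopfian\,/\,Kurosh argument rather than a value-pinning one.
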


As an immediate consequence of the previous theorem we obtain the following corollary.

\begin{corollary}\label{fiber-cor} Let $1\rightarrow N \rightarrow \Gamma \overset{\pi}{\rightarrow} G\ast G \rightarrow 1$ be a short exact sequence, where $\Gamma, N , G$ are as in Theorem \ref{fiberprod} and $P\subset \Gamma\times \Gamma$ is the fiber product of $\Gamma$ with respect to $N$. There is no algorithm that decides whether a torsion-free hyperbolic group has a quotient isomorphic to $P$.\end{corollary}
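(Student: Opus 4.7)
The plan is to derive Corollary \ref{fiber-cor} from Theorem \ref{fiberprod} by a routine reduction. Suppose, for contradiction, that there is an algorithm $\mathcal{A}$ which, given a finite presentation of a torsion-free hyperbolic group $\Lambda$, decides whether $\mathsf{Epi}(\Lambda,P)\neq \emptyset$. I will feed $\mathcal{A}$ the presentations $\langle Y\mid \mathcal{S}_n\rangle$ produced by Theorem \ref{fiberprod} and read off a decision procedure for the set $\{n\in\mathbb{N}:\mathsf{Epi}(\Delta_n,P)\neq \emptyset\}$, which is not recursive.

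First I would invoke Theorem \ref{fiberprod} to produce the recursive sequence of finite sets of relators $(\mathcal{S}_n)_{n\in\mathbb{N}}$ over the fixed finite alphabet $Y$, with $|\mathcal{S}_n|$ independent of $n$. By the theorem the groups $\Delta_n=\langle Y\mid \mathcal{S}_n\rangle$ are torsion-free and satisfy the $C'(\tfrac{1}{6})$ small cancellation condition; since each presentation is finite, the classical theorem of Strebel (and Gromov) guarantees that $\Delta_n$ is word hyperbolic. Hence each $\Delta_n$ belongs to the class of torsion-free hyperbolic groups to which the supposed algorithm $\mathcal{A}$ applies.

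Next I would define a Turing machine that on input $n$ computes the finite set $\mathcal{S}_n$ (this is possible because $(\mathcal{S}_n)_{n\in\mathbb{N}}$ is recursive), writes down the finite presentation $\langle Y\mid \mathcal{S}_n\rangle$ of $\Delta_n$, and runs $\mathcal{A}$ on this presentation. The output decides whether $\mathsf{Epi}(\Delta_n,P)\neq \emptyset$. This contradicts the assertion of Theorem \ref{fiberprod} that $\{n:\mathsf{Epi}(\Delta_n,P)\neq\emptyset\}$ is not recursive, completing the proof.

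There is essentially no obstacle beyond verifying that the inputs supplied to $\mathcal{A}$ indeed satisfy its hypothesis; this is immediate from the small cancellation and torsion-freeness statements in Theorem \ref{fiberprod} together with hyperbolicity of finite $C'(\tfrac{1}{6})$ presentations. All the mathematical content has already been packed into Theorem \ref{fiberprod}, so the corollary is a one-line reduction.
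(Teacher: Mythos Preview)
Your argument is correct and is exactly the immediate reduction the paper intends: the corollary is stated as ``an immediate consequence'' of Theorem~\ref{fiberprod} with no separate proof, and your contradiction argument via feeding the recursive sequence $(\Delta_n)_n$ to a putative algorithm is precisely what is meant. One minor point: hyperbolicity of finitely presented $C'(\tfrac16)$ groups is usually attributed to Greendlinger/Gromov rather than Strebel, but this does not affect the validity of the argument.
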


Let us note here that the fiber product $P\subset \Gamma \times \Gamma$ is finitely presented by the 1-2-3 theorem \cite{BBMS}. A plethora of examples of groups $G$ for which Theorem \ref{fiberprod} and Corollary \ref{fiber-cor} apply exist by \cite[Thm. 3.1]{Bridson-J.Alg.}. Moreover, the Rips algorithm \cite{Rips}, with imput group $G\ast G$, provides examples of pairs $(\Gamma,N)$ satisfying the assumptions of Theorem \ref{fiberprod}.

Let us remark that for a finitely presented group $W$, the property of a finitely presented group admitting an epimorphism onto $W$ is not a Markov property (e.g. see \cite[Def. 3.1]{Miller}), since, any finitely presented group $G$ can be embedded into the  free product $G\ast W$ and the latter group clearly surjects onto $W$. In particular, Theorems \ref{mainthm2}, \ref{fiberprod} and their corollaries are not direct consequences of the Adian--Rabin theorem \cite{A, R}. Nevertheless, one of the main tools that we use for the proof of Theorem \ref{mainthm2} is the undecidability result of Miller (see Theorem \ref{undecidability}) which is based on the existence of finitely presented groups with unsolvable word problem \cite{Novikov, Boone}. 

\subsection*{Acknowledgements} I would like to thank Sami Douba for helpful comments in an earlier version of the paper.

\section{ Rips's construction} \label{Rips2}
For  a non-empty set $X$ denote by $\mathsf{F}(X)$ the free group on $X^{\pm 1}$. For a subset $S\subset \mathsf{F}(X)$ denote by $\llangle S\rrangle=\big \langle \{gsg^{-1}:g\in \mathsf{F}(X), s\in S\}\big\rangle$ the intersection of all normal subgroups of $\mathsf{F}(X)$ containing $S$. For the definition of the $C'(\delta)$, $\delta \in \mathbb{Q}\cap(0,\infty)$, small cancellation condition we refer the reader to \cite[Ch. V]{LS}. 

The Rips construction \cite{Rips} provides an algorithm that takes as input a finite presentation of a group $W$ and outputs an explicit presentation of a torsion-free hyperbolic group $\Gamma$ and a short exact sequence $1\rightarrow N\rightarrow \Gamma\rightarrow W\rightarrow 1$, where $N$ is a normal $2$-generated subgroup of $\Gamma$. There are certain variations of this construction providing examples of hyperbolic groups with interesting properties, see, for instance, \hbox{\cite{Arenas, AS, BO,Rips-Wise}.} \par We provide below a refined version of the Rips construction that we use for the proof of Theorems \ref{mainthm2} and \ref{fiberprod}.

\begin{proposition}\label{Rips0} Let $\Lambda$ be a finitely generated group which contains a residually solvable subgroup of index $p \in \mathbb{N}$. There exists an algorithm that takes as input a finite presentation $W=\langle X\ |\ \mathcal{R}\rangle$ and constructs a torsion-free, hyperbolic, \hbox{$C'(\frac{1}{6})$ small cancellation group} $$\Gamma_W=\big \langle X\sqcup\{x,y\}\ |\ \mathcal{R}_W\big \rangle$$ where $|\mathcal{R}_W|=|\mathcal{R}|+4|X|+5$, satisfying the following properties: \begin{enumerate}
\item \label{item1-Rips} $N_W=\langle x,y\rangle$ is a normal subgroup of $\Gamma_W$ and there is a short exact sequence $$1\rightarrow N_W \rightarrow \Gamma_W \overset{\pi}{\rightarrow} W\rightarrow 1.$$

\item \label{item2-Rips} Any homomorphism $\varphi:\Gamma_W\rightarrow \Lambda$ factors as $\varphi=\varphi'\circ \pi$ for some homomorphism $\varphi':W\rightarrow \Lambda$ and the projection \hbox{$\pi:\Gamma_W\rightarrow W$.} 
\end{enumerate}\end{proposition}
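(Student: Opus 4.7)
The plan is to refine the classical Rips construction~\cite{Rips} by adjoining five words purely in $\{x,y\}$ designed so that any homomorphism $\varphi:\Gamma_W\to \Lambda$ must send $x$ and $y$ to the identity. Introduce two new generators $x,y$ and take $\mathcal{R}_W$ to consist of three groups of relators: (a) for each $r\in\mathcal{R}$, a single ``decorated'' relator $r\cdot V_r(x,y)$ (giving $|\mathcal{R}|$ relators); (b) for each $a\in X$ and each $\varepsilon\in\{\pm 1\}$, two conjugation relators $a^{\varepsilon}xa^{-\varepsilon}U_{a,\varepsilon}(x,y)^{-1}$ and $a^{\varepsilon}ya^{-\varepsilon}V'_{a,\varepsilon}(x,y)^{-1}$ (giving $4|X|$ relators); and (c) five ``pure'' relators $w_1,\ldots,w_5\in \mathsf{F}(\{x,y\})$. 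The total is $|\mathcal{R}|+4|X|+5$. The words $V_r,U_{a,\varepsilon},V'_{a,\varepsilon}$ are built in the usual Rips fashion as long positive words in $x,y$ with prescribed structure, and together with the $w_i$ can be taken sufficiently long and with sufficiently disjoint subwords that the full system satisfies $C'(\tfrac{1}{6})$; torsion-freeness and hyperbolicity of $\Gamma_W$ then follow from Greendlinger's lemma and standard small cancellation theory.

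Property (1) is immediate: the conjugation relators in (b) show that $\{x,y\}$ is $X$-invariant modulo words in $x,y$, so $N_W=\langle x,y\rangle$ is normal, and imposing $x=y=1$ in the presentation of $\Gamma_W$ reduces it to $\langle X\mid \mathcal{R}\rangle = W$, yielding the short exact sequence.

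For property (2), choose the pure relators so that the quotient $Q:=\mathsf{F}(\{x,y\})/\llangle w_1,\ldots,w_5\rrangle$ is perfect and admits no proper subgroup of index $\leq p$. A concrete realisation is to take $Q$ to be a $2$-generated finite non-abelian simple group of order exceeding $p!$ (say $A_n$ for sufficiently large $n$): simplicity and non-abelianness give perfectness, and if $Q$ had a proper subgroup of index $n\le p$ then the coset action would embed $Q$ into $S_n\subseteq S_p$, contradicting $|Q|>p!$. A finite presentation of such a $Q$ can be padded by long commutator-heavy words into exactly five relators $w_i$ that are $C'(\tfrac{1}{6})$-compatible with the relators in (a) and (b). Now, given any homomorphism $\varphi:\Gamma_W\to\Lambda$, its restriction to $\langle x,y\rangle\le\Gamma_W$ factors through $Q$, because each $w_i$ is already a relator of $\Gamma_W$. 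Writing $\Lambda_0\le\Lambda$ for the given residually solvable subgroup of index $p$, the subgroup $K:=\{g\in \langle x,y\rangle : \varphi(g)\in \Lambda_0\}$ has index $\le p$ in $\langle x,y\rangle$, and pulling further back to $Q$ gives a subgroup of index $\le p$, which by construction of $Q$ must equal $Q$. Hence $\varphi(\langle x,y\rangle)\subseteq \Lambda_0$. This image is a quotient of the perfect group $Q$, hence perfect, while being a subgroup of the residually solvable $\Lambda_0$, hence itself residually solvable; but any perfect residually solvable group is trivial, since every homomorphism from a perfect group to a solvable group has perfect-and-solvable, and thus trivial, image. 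Therefore $\varphi(x)=\varphi(y)=1$, and since $N_W$ is the normal closure of $\{x,y\}$ we obtain $\varphi(N_W)=\{1\}$, so $\varphi$ factors through the projection $\pi:\Gamma_W\to W$.

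The main obstacle is the simultaneous engineering of the five pure relators $w_1,\ldots,w_5$: they must be algebraically rigid enough to impose both perfectness and the absence of low-index subgroups on $Q$, while at the same time being long and generic enough to fit, together with the Rips-style relators in (a) and (b), into a uniform $C'(\tfrac{1}{6})$ presentation. Once a template for the $w_i$ is fixed once and for all depending only on $\Lambda$ and $p$, the passage $W\mapsto \Gamma_W$ is uniformly algorithmic in $|X|$ and $\mathcal{R}$.
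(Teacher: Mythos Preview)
Your overall strategy---add five ``pure'' relators in $x,y$ so that $N_W$ becomes perfect and has no proper subgroup of index $\leq p$, then use residual solvability of $\Lambda_0$ to kill $\varphi(N_W)$---is exactly the paper's strategy, and your deduction of property~(2) from these two facts about $N_W$ is correct. The gap is in your concrete realisation of the five relators.

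You propose that $Q=\mathsf{F}(\{x,y\})/\llangle w_1,\ldots,w_5\rrangle$ be a finite non-abelian simple group. But the $C'(\tfrac16)$ condition on the full presentation of $\Gamma_W$ restricts to a $C'(\tfrac16)$ condition on $\{w_1,\ldots,w_5\}$ alone, so $Q$ is itself a $C'(\tfrac16)$ group. Since you also need the $w_i$ to be non-proper-powers (otherwise $\Gamma_W$ acquires torsion), standard small cancellation theory forces $Q$ to be torsion-free, hence infinite. Equivalently: $N_W$ is a quotient of your $Q$; if $Q$ were finite simple then $N_W$ would be trivial or finite, but Greendlinger's lemma shows $x,y\neq 1$ in $\Gamma_W$, and torsion-freeness of $\Gamma_W$ rules out $N_W$ finite. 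Either way your choice of $Q$ is impossible, and the ``padding'' you invoke cannot repair this: any set of relators presenting a finite group will fail $C'(\tfrac16)$.

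What the paper does instead is engineer the five words explicitly, not aiming at any prescribed quotient. Three of them (of the shape $xy^{m}xy^{m+1}\cdots$, with $m=3^8(p!)$) are chosen so that, together with $y^m$, they normally generate $\mathsf{F}(\{x,y\})$; this forces $N_W=\llangle y^m\rrangle$ inside $N_W$, whence $N_W$ has no proper subgroup of index $\leq p$. The remaining two (of the shape $xy\,xy^2\cdots xy^{81}$ and $xy^{82}\cdots xy^{162}$) are used to show $y^{3^8}\in[N_W,N_W]$, which combined with $N_W=\llangle y^m\rrangle$ gives perfectness. The point is that these properties are proved \emph{about $N_W$ directly} from the specific form of the words; the auxiliary group $Q$ they present is infinite and plays no role. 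Replacing your finite-simple-group ansatz with this kind of explicit construction is the missing ingredient.
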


\begin{proof} The proof follows the construction in  \cite{Rips} with slight modifications in order to ensure (\ref{item2-Rips}). For this, suppose that $W$ is given by the presentation \begin{equation}\label{presentation-W}W=\big \langle x_1,\ldots,x_r \ |\ R_1, \ldots,R_{s} \big \rangle\end{equation} where $R_1,\ldots,R_{s}$ are (possibly empty) words of the free group on $\{x_1,\ldots,x_r\}^{\pm 1}$. Let us set $m:=3^8 (p!)$ and consider the group $\Gamma_W$ given by the presentation \begin{equation*}\label{presentation1} \Gamma_W=\scalebox{3.2}{\Bigg \langle} \! \! \! \! \! \! \!  \! \! x_1,\ldots, x_r, x,y   \ \scalebox{3}{\Bigg{|}} \ \begin{matrix}[1.25]
xyxy^{2}\cdots xy^{81} \ \ \ \ \ \ \ \ \ \ \ \ \ \  \\
xy^{82}xy^{83}\cdots xy^{162} \ \ \ \ \ \ \ \  \  \\
xy^{m}xy^{m+1}\cdots xy^{2m+1} \ \ \  \\
xy^{3m}xy^{3m+1}\cdots xy^{4m+2}\\
xy^{5m}xy^{5m+1}\cdots xy^{6m+3}\\ 
\vspace{0.001cm} \end{matrix} \ \ \begin{matrix}[1.25] 
R_jxy^{\lambda j +1}xy^{\lambda j +2}\cdots xy^{\lambda j+\lambda} \ \ \ \ \ \  \ \ \ \ \ \\
x_{i}x x_{i}^{-1}xy^{k_{1i}+1}xy^{k_{1i}+2}\cdots xy^{k_{1i}+7m}\\ 
x_{i}yx_{i}^{-1}xy^{k_{2i}+1}xy^{k_{2i}+2}\cdots xy^{k_{2i}+7m}  \\ 
x_{i}^{-1}x x_{i}xy^{k_{3i}+1}xy^{k_{3i}+2}\cdots xy^{k_{3i}+7m}\\
x_{i}^{-1}yx_{i}xy^{k_{4i}+1}xy^{k_{4i}+2}\cdots xy^{k_{4i}+7m}\\  j=1\ldots,s, \ i=1\ldots,r \   \ \ \ \ \ \ \end{matrix} \! \! \! \!\scalebox{3.2}{\Bigg \rangle} \end{equation*} where $k_{di}:=7m(4i+d-4)$, $d=1,\ldots,4$ and $\lambda:=100 m r \big(|R_1|+\cdots+|R_s|\big)$, where $|R_j|$ is the word length of $R_j\in \mathsf{F}(\{x_1,\ldots,x_r\})$.

By the previous choices, $\Gamma_W$ satisfies the $C'(\frac{1}{9})$ small cancellation condition and is hyperbolic \cite{Greendlinger}. By construction, $N_W=\langle x,y\rangle$ is normal in $\Gamma_W$ and $ \Gamma_W /N_W=W$. Moreover, all relations used in the presentation of $\Gamma_W$ are no proper powers of words in $\mathsf{F}(\{a_1,\ldots, a_r,x,y\})$ and thus $\Gamma_W$ is torsion-free \cite{Greendlinger} \hbox{(see also \cite[Ch. V, Thm. 10.1]{LS}).} \par Now we check that every homomorphism from $N_W$ to $\mathsf{G}$ is trivial. \hbox{To see this, first note that} $$\mathsf{F}(\{x,y\})=\llangle y^{m}, xy^{m}xy^{m+1}\cdots xy^{2m+1}, xy^{3m}xy^{3m+1}\cdots xy^{4m+2}, xy^{5m}xy^{5m+1}\cdots xy^{6m+3} \rrangle,$$ hence $N_W=\langle \{hy^{m}h^{-1}:h\in N_W\}\rangle=\llangle y^m \rrangle$. In addition, $N_W=[N_W,N_W]$. To see this, observe from the first two defining relations of $\Gamma_W$ that $x^{3^4}y^{k},$ $x^{3^4}y^{3^8+k}\in [N_W,N_W]$, where $k=46\cdot 81$. This implies $y^{3^8}\in [N_W,N_W]$, $y^{\alpha}\in [N_W,N_W]$ and $[N_W,N_W]=\llangle y^{m}\rrangle$.

Now let $\varphi:N_W \rightarrow \Lambda$ be a homomorphism. Let also $\Lambda'$ be a residually solvable subgroup of $\Lambda$ with $p=[\Lambda:\Lambda']$. By the previous observation, $N_W$ cannot have a non-trivial finite-index subgroup of index at most equal to $p$,\footnote{If $N_W'$ is a subgroup with $[N_W:N_W']\leq p$, there is a normal finite-index subgroup $N_W''\triangleleft  N_W$, of index at most $p!$, contained in $N_W'$. In particular, $hy^{p!}h^{-1}\in N_W''$ for every $h\in N_W$, hence $N_W'=N_W''=N_W$.} hence, $\varphi(N_W)$ is a subgroup of $\Lambda'$. As $N_W=[N_W,N_W]$, any solvable quotient of $N_W$ is trivial and $\varphi$ is trivial. This shows that any homomorphism of $\Gamma_W$ to $\Lambda$ factors through the natural projection $\pi:\Gamma_W\rightarrow W$.\end{proof}

\begin{corollary}\label{domain} Let $A$ be a finitely generated domain and $W=\langle X \ | \ \mathcal{R}\rangle$ a finitely presented group. For every $\ell\geq 2$, there exists a torsion-free $C'(\frac{1}{6})$ small cancellation group $\Gamma_W=\langle Y \ | \ \mathcal{Q}\rangle$ with $|Y|=|X|+2$, $|\mathcal{Q}|=|\mathcal{R}|+4|X|+5$ and an epimorphism $\pi:\Gamma_W\rightarrow W$, such that any representation $\rho:\Gamma_W\rightarrow \mathsf{GL}_{\ell}(A)$ factors as $\rho=\rho'\circ \pi$ for some representation $\rho':W\rightarrow \mathsf{GL}_{\ell}(A)$.\end{corollary}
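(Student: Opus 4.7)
The plan is to deduce the corollary from Proposition \ref{Rips0} by exhibiting a single integer $p=p(A,\ell)$ such that every finitely generated subgroup of $\mathsf{GL}_\ell(A)$ contains a residually solvable subgroup of index at most $p$. With such a $p$ in hand, one feeds $p$ and the presentation $W=\langle X\mid\mathcal{R}\rangle$ into the algorithm of Proposition \ref{Rips0}; since the construction of $\Gamma_W$ depends only on $p$ and on $W$, the resulting presentation automatically has $|Y|=|X|+2$ generators and $|\mathcal{Q}|=|\mathcal{R}|+4|X|+5$ relations, and yields a torsion-free $C'(\frac{1}{6})$ hyperbolic group together with the epimorphism $\pi\colon \Gamma_W\to W$.

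To produce $p$, I use a congruence construction. Since $A$ is a finitely generated commutative ring, the Nullstellensatz implies that every maximal ideal $\mathfrak{m}\subset A$ has finite residue field $A/\mathfrak{m}$ (a field that is finitely generated as a $\mathbb{Z}$-algebra is necessarily finite). Pick any such $\mathfrak{m}$ and set $p:=|\mathsf{GL}_\ell(A/\mathfrak{m})|$. The principal congruence subgroup $\Gamma(\mathfrak{m}):=\ker\bigl(\mathsf{GL}_\ell(A)\to\mathsf{GL}_\ell(A/\mathfrak{m})\bigr)$ then has index at most $p$. I next verify it is residually nilpotent: the usual commutator estimate
$$[\,I+\mathfrak{m}^a M_\ell(A),\ I+\mathfrak{m}^b M_\ell(A)\,]\subset I+\mathfrak{m}^{a+b}M_\ell(A)$$
gives $\gamma_k(\Gamma(\mathfrak{m}))\subset \Gamma(\mathfrak{m}^k)$, and since $A$ is a Noetherian domain (Hilbert's basis theorem) and $\mathfrak{m}$ is proper, Krull's intersection theorem yields $\bigcap_k \mathfrak{m}^k=(0)$, hence $\bigcap_k \Gamma(\mathfrak{m}^k)=\{I\}$. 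In particular $\Gamma(\mathfrak{m})$ is residually solvable.

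With $p$ fixed, I apply Proposition \ref{Rips0} to $W$ (formally, to any finitely generated $\Lambda$ containing a residually solvable subgroup of index $p$) and obtain the group $\Gamma_W=\langle X\sqcup\{x,y\}\mid \mathcal{R}_W\rangle$ with the prescribed counts and the projection $\pi\colon\Gamma_W\to W$. Now given any representation $\rho\colon\Gamma_W\to\mathsf{GL}_\ell(A)$, the image $\Lambda_\rho:=\rho(\Gamma_W)$ is finitely generated, and $\Lambda_\rho\cap \Gamma(\mathfrak{m})$ is a subgroup of $\Lambda_\rho$ of index at most $p$ which, being a subgroup of the residually nilpotent group $\Gamma(\mathfrak{m})$, is residually solvable. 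Thus $\Lambda_\rho$ satisfies the hypothesis of Proposition \ref{Rips0} with the \emph{same} $p$ used to construct $\Gamma_W$; conclusion (\ref{item2-Rips}) of that proposition therefore gives a factorization $\rho=\rho'\circ\pi$ for some $\rho'\colon W\to \Lambda_\rho\hookrightarrow\mathsf{GL}_\ell(A)$, as required.

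The only conceptual content beyond invoking Proposition \ref{Rips0} is the uniform bound on the index: choosing $\Gamma_W$ once for all requires $p$ to be independent of $\rho$, which is exactly what the congruence subgroup $\Gamma(\mathfrak{m})$ provides. The verifications of residual nilpotence and the Nullstellensatz-type input are standard and routine; no further obstacle arises.
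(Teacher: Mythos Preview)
Your argument is correct and follows the same overall plan as the paper: fix once and for all an integer $p$ so that $\mathsf{GL}_\ell(A)$ has a residually solvable subgroup of index at most $p$, build $\Gamma_W$ via Proposition~\ref{Rips0} from that $p$, and conclude that every $\rho$ kills $N_W$. The only substantive difference is in how the bound $p$ is obtained. The paper simply invokes Platonov's theorem \cite{Platonov}, which supplies a prime $q$ and an index $p$ (depending only on $A$ and $\ell$) such that $\mathsf{GL}_\ell(A)$ contains a residually $q$-finite subgroup of index $p$. You instead give a self-contained congruence argument: pick a maximal ideal $\mathfrak{m}\subset A$, use the Nullstellensatz to see $A/\mathfrak{m}$ is finite, and verify via the commutator filtration and Krull's intersection theorem that the principal congruence subgroup $\Gamma(\mathfrak{m})$ is residually nilpotent. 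This is essentially a direct proof of the instance of Platonov's theorem needed here; it buys you a more elementary and explicit argument (no black-box citation), at the cost of a few extra lines. Your added care in applying Proposition~\ref{Rips0} to the finitely generated image $\Lambda_\rho=\rho(\Gamma_W)$ rather than to $\mathsf{GL}_\ell(A)$ itself is a nice touch, since the proposition nominally asks for $\Lambda$ finitely generated; the paper sidesteps this by observing (implicitly) that the proof of Proposition~\ref{Rips0} uses only the index bound, not finite generation of $\Lambda$.
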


\begin{proof} By Platonov's theorem \cite{Platonov}, there is $p\in \mathbb{N}$ and a prime $q$, depending only on $A$, such that $\mathsf{GL}_{\ell}(A)$ contains a subgroup of index $p$ which is residually $q$-finite. Given $W$ and $p$, the output hyperbolic group $\Gamma_W$ of the algorithm of Proposition \ref{Rips0} has the property that for any homomorphism $\rho:\Gamma_W\rightarrow \mathsf{GL}_\ell(A)$, $N_W$ is contained in $\textup{ker}\rho$.\end{proof}

\begin{rmks} \normalfont{ \noindent \textup{(i)} By combined work of Agol \cite{Agol} and Wise \cite{Wise}, finitely presented groups satisfying the $C'(\frac{1}{6})$ small cancellation condition are linear over $\mathbb{Z}$. In fact, such groups also admit Anosov representations by \cite{DFWZ}. Corollary \ref{domain} provides explicit examples of torsion-free, linear hyperbolic groups all of whose representations, over a finitely generated domain $A$, factor through some representation of a given finitely presented group $W$. Examples of hyperbolic groups all of whose $\ell$-dimensional complex representations factor through a representation of a a given finitely generated group $G$, were exhibited by Osin--Belegradek by using their variation of the Rips construction \cite[Thm. 1.1 \& 3.1]{BO}. Let us note that these groups  are non-linear since they contain non-linear  subgroups that are quotients of quaternionic hyperbolic lattices \cite{Kap-nonlinear}.\\ \noindent \textup{(ii)} Given a finitely presented group $W$ and a field ${\bf K}$, there exists a Rips short exact sequence $1\rightarrow M_W\rightarrow \Delta_W\rightarrow W\rightarrow 1$, where $\Delta_W$ is a torsion-free, $C'(\frac{1}{6})$ small cancellation group such that any $\ell$-dimensional representation of $\Delta_W$ over ${\bf K}$ factors through a representation of $W$.  
\par To see this, for every $n\in \mathbb{N}_{\geq 5}\cup\{\infty\}$ consider the finitely generated group \begin{equation*} \Theta_n:=\scalebox{1.9}{\Bigg \langle}\!\!\! x,y   \ \scalebox{1.8}{\Bigg{|}} \ \begin{matrix}[1.15]
xy^{k !+1}xy^{k !+2}\cdots xy^{2k !} \ \ \ \ \ \ \ \\
xy^{2k !+1}xy^{2k !+2}\cdots xy^{3k !-1}  \\
xy^{3k !+1}xy^{3k!+2}\cdots xy^{4k !+1}\\
k=5,\ldots,n \ \ \ \  \end{matrix} \ \! \! \! \! \!\scalebox{1.9}{\Bigg \rangle}.\end{equation*} Note that $ \Theta_n$ satisfies the $C'(\frac{1}{9})$ small cancellation condition for $n\geq 5$. Since $ \Theta_{\infty}=\llangle y^{n!}\rrangle$ \hbox{for every $n \geq 5$,} $ \Theta_{\infty}$ has no proper subgroups of finite-index and any finite dimensional representation of $ \Theta_{\infty}$ (over any field) is trivial. Let $\overline{{\bf K}}$ be the algebraic closure of ${\bf K}$. Given $\ell \in \mathbb{N}$, by Hilbert's Nullstelensatz (see the discussion in the first paragraph of the proof of \cite[Thm. 3.1]{BO}) there exists $d\geq 5$, depending on $\ell$, such that every representation $\rho_d: \Theta_{d} \rightarrow \mathsf{GL}_{\ell}\big(\overline{{\bf K}}\big)$ factors through the natural projection $\pi_{d}: \Theta_{d} \rightarrow{}  \Theta_{\infty}$ and hence $\rho_d$ is trivial. By using the set of $3(d-4)$ relations of $ \Theta_{d}$ and proceeding as in Rips's algorithm, there is a short exact sequence $1\rightarrow M_W\rightarrow \Delta_W\rightarrow W\rightarrow 1$ such that $M_W$ is a quotient of $\Theta_d$ satisfying the prescribed properties.}\end{rmks}

\section{Proof of the Theorems}

 For a finitely generated group $\mathsf{\Gamma}$ its first Betti number is $b_1(\mathsf{\Gamma}):=\textup{dim}_{\mathbb{Q}}H_1(\Gamma,\mathbb{Q})$. A group $G$  is called Hopfian if every epimorphism from $G$ to itself is an automorphism of $G$. The following lemma, which is crucial for the proof of Theorems \ref{mainthm2} and \ref{maincor}, provides a necessary and sufficient criterion for certain finitely presented groups to admit an epimorphism onto direct products of finitely many copies of a \hbox{Hopfian group $Q$ that maps onto the free group of rank $2$.}

\begin{lemma}\label{mainlemma} Let $Q=\langle x_1,\ldots, x_r \ | \ \mathcal{R}\rangle$ be a Hopfian group which maps onto the free group of rank $2$ and the centralizer of any non-cyclic free subgroup of $Q$ is trivial. Let $\Sigma \subset \mathsf{F}(\{x_1,\ldots,x_r\})$ be a non-empty finite subset such that the group $\langle x_1,\ldots, x_r \ | \ \mathcal{R}\cup \Sigma \rangle$ has zero first Betti number. Fix $k\in \mathbb{N}$ and consider the finitely presented group \begin{align*} \label{presentation3} W_{\Sigma,k}:=\Big \langle x_1,\ldots,x_r, \{y_{1\sigma}\}_{\sigma \in \Sigma}, \ldots, \{y_{k\sigma}\}_{\sigma \in \Sigma}  \ \big| \ \mathcal{R}, [\sigma^{-1}y_{s\sigma}, y_{s\sigma'}], [y_{s \sigma},y_{t \sigma'}], \sigma,\sigma'\in \Sigma, s\neq t \Big \rangle.\end{align*} The group $W_{\Sigma,k}$ maps onto $Q^{k+1}$ if and only if $\big \langle x_1,\ldots,x_r \ | \ \mathcal{R} \cup \Sigma \big \rangle \cong \{1\}.$ \end{lemma}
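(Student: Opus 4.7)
The plan is to prove both implications separately. For the backward direction, assuming $\llangle\Sigma\rrangle_Q=Q$, I will exhibit an explicit epimorphism $\psi:W_{\Sigma,k}\to Q^{k+1}$. Indexing coordinates by $\{0,1,\ldots,k\}$, I set $\psi(x_i)=(x_i,\ldots,x_i)$ (the diagonal) and $\psi(y_{s\sigma})=(1,\ldots,1,\sigma,1,\ldots,1)$ with $\sigma$ in the $s$-th coordinate (for $s\in\{1,\ldots,k\}$) and $1$ elsewhere, including coordinate $0$. A coordinate-wise check verifies the defining relations: in coordinate $s$ the commutator $[\sigma^{-1}y_{s\sigma},y_{s\sigma'}]$ reduces to $[\sigma^{-1}\sigma,\sigma']=1$, while in coordinates $j\neq s$ it becomes $[\sigma^{-1},1]=1$; the cross-commutators $[y_{s\sigma},y_{t\sigma'}]$ for $s\neq t$ vanish in every coordinate since at least one entry is $1$. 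Surjectivity then follows because conjugating $\psi(y_{s\sigma})$ by a diagonal $(q,\ldots,q)$ yields $(1,\ldots,q\sigma q^{-1},\ldots,1)$; ranging over $q\in Q$, the $s$-th coordinate exhausts $\llangle\Sigma\rrangle_Q=Q$, and together with the diagonal $Q$, this generates $Q^{k+1}$.

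For the forward direction, I assume $\psi:W_{\Sigma,k}\twoheadrightarrow Q^{k+1}$ and set $\psi_j=\pi_j\circ\psi$, $a_i^{(j)}=\psi_j(x_i)$, $b_{s\sigma}^{(j)}=\psi_j(y_{s\sigma})$, and $B_s^{(j)}:=\langle b_{s\sigma}^{(j)}:\sigma\in\Sigma\rangle\leq Q$. The relations yield that $B_s^{(j)}$ and $B_t^{(j)}$ commute in $Q$ for $s\neq t$, each $b_{s\sigma}^{(j)}$ centralizes $\sigma(a^{(j)})$ in $Q$, and $\{a_i^{(j)}\}$ satisfies $\mathcal{R}$. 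First I invoke the centralizer hypothesis on $Q$: if two distinct $B_{s_1}^{(j)}$ and $B_{s_2}^{(j)}$ both contained non-cyclic free subgroups, they would centralize each other, but the centralizer of a non-cyclic free subgroup is trivial, forcing one of them to vanish --- a contradiction. Hence in each coordinate $j$, at most one column $s_0(j)$ has $B_{s_0(j)}^{(j)}$ containing a non-cyclic free subgroup. Since $\{a_i^{(j)}\}$ satisfies $\mathcal{R}$, the assignment $x_i\mapsto a_i^{(j)}$ extends to a homomorphism $\theta_j:Q\to Q$.

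The next step is to show each $\theta_j$ is surjective; Hopfian-ness of $Q$ then upgrades $\theta_j$ to an automorphism. Composing $\psi$ coordinate-wise with the inverse automorphisms $\theta_j^{-1}$, I reduce to the case $\psi(x_i)=(x_i,\ldots,x_i)$ is diagonal. Under this normalization the commutation relations force $\psi(y_{s\sigma})\in C_Q(\sigma)^{k+1}$, and the surjectivity constraint, via diagonal conjugation, ultimately forces $\llangle\Sigma\rrangle_Q=Q$: mirroring the backward construction, the $y$-generators must collectively generate, modulo the diagonal, the normal closure of $\Sigma$ in each non-diagonal coordinate of $Q^{k+1}$, and this equals $Q$ only when $\llangle\Sigma\rrangle_Q=Q$.

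The hardest step will be establishing the surjectivity of $\theta_j$. This requires combining the pigeonhole between $k+1$ coordinates and $k$ columns of $y$-generators, the structural bound on each $B_s^{(j)}$ coming from the centralizer hypothesis, and the $b_1=0$ assumption on $\langle x_1,\ldots,x_r\mid\mathcal{R}\cup\Sigma\rangle$ --- the last of which rules out a potential abelian residual quotient through which $\psi_j$ could achieve surjectivity onto $Q$ without $\theta_j$ itself being surjective. The absence of non-trivial quotients to $\mathbb{Z}$ of the target group $\langle x_1,\ldots,x_r\mid\mathcal{R}\cup\Sigma\rangle$ will be crucial in closing the loop between the centralizer-based structural analysis and the Hopfian automorphism argument.
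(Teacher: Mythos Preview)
Your backward direction is correct and coincides with the paper's construction.

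Your forward direction, however, has a genuine gap. The whole argument rests on proving that each coordinate homomorphism $\theta_j:Q\to Q$, $x_i\mapsto a_i^{(j)}$, is surjective, and you explicitly defer this as ``the hardest step'' without supplying a proof. The ingredients you list (pigeonhole on $k+1$ coordinates versus $k$ columns, the centralizer hypothesis, $b_1=0$) do not assemble into an argument as stated: the centralizer hypothesis tells you that in each coordinate at most one $B_s^{(j)}$ contains a non-abelian free group, but this alone does not force $\langle a_i^{(j)}\rangle=Q$, and the pigeonhole you describe compares the wrong quantities. Moreover, even granting the normalization $\psi(x_i)=(x_i,\ldots,x_i)$, your final step does not go through: you claim the $y$-images must generate, modulo the diagonal, exactly $\llangle\Sigma\rrangle_Q$ in each off-diagonal slot, but nothing forces an arbitrary epimorphism to mimic the structure of the explicit backward map; the elements $\psi(y_{s\sigma})$ may have nontrivial components in every coordinate (subject only to $\psi(y_{s\sigma})\in C_Q(\sigma)^{k+1}$ and the mutual commutation constraints), and the diagonal is not normal in $Q^{k+1}$, so ``modulo the diagonal'' is not a quotient.

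The paper takes a different route that avoids both problems. It never tries to show $\theta_j$ is onto. Instead it works with the \emph{pair} of commuting subgroups $\mathsf{H}_{kj}=\langle y_{j\sigma}\rangle$ and $\mathsf{H}_{kj}'=\langle \sigma^{-1}y_{j\sigma}\rangle$ and exploits the quotient identifications $W_{\Sigma,k}/\llangle\mathsf{H}_{kj}\rrangle\cong W_{\Sigma,k-1}$ and $W_{\Sigma,k}/\llangle\mathsf{H}_{kj}'\cup\bigcup_{r\neq j}\mathsf{H}_{kr}\rrangle\cong Q$. For $k=1$, the centralizer hypothesis together with the fixed surjection $\theta:Q\twoheadrightarrow F_2$ and the $b_1=0$ assumption force $\phi_1(\mathsf{H}_{11})$ and $\phi_1(\mathsf{H}_{11}')$ into distinct factors of $Q\times Q$; Hopfianness of $Q$ (applied to the induced epimorphisms $Q\cong W_{\Sigma,1}/\llangle\mathsf{H}_{11}\rrangle\twoheadrightarrow Q$ and similarly for $\mathsf{H}_{11}'$) then yields $W_{\Sigma,1}=\llangle\mathsf{H}_{11},\mathsf{H}_{11}'\rrangle$, which immediately gives $\langle x_1,\ldots,x_r\mid\mathcal{R}\cup\Sigma\rangle\cong\{1\}$. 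For $k\geq 2$ the same analysis shows $\phi_k(\mathsf{H}_{k1})$ lands in a single factor, and quotienting by $\llangle\mathsf{H}_{k1}\rrangle$ reduces to $W_{\Sigma,k-1}\twoheadrightarrow Q^k$. The primed subgroups $\mathsf{H}_{kj}'$ and this inductive reduction are the missing ideas in your outline.
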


\begin{proof} Let $\{\overline{x}_1,\ldots,\overline{x}_r\}\subset Q$, $\overline{\Sigma}\subset Q$ and $\overline{\sigma}\in Q$ be the image of $\{x_1,\ldots,x_r\}$, $\Sigma\subset \mathsf{F}(\{x_1,\ldots,x_r\})$ and $\sigma \in \Sigma$ in $Q$ respectively. We first observe that the finitely generated subgroup \hbox{$W_{\Sigma,k}'$ of $Q^{k+1} $,} $$W_{\Sigma,k}':=\left\{\begin{matrix}[1.7]
\Big \langle \big\{(\overline{x}_1,\overline{x}_1),\ldots, (\overline{x}_r, \overline{x}_r), (1,\overline{\sigma}):  \overline{\sigma}\in \overline{\Sigma}\big\}  \Big \rangle, \  k=1\ \ \ \ \ \ \ \ \ \ \ \ \ \ \ \ \ \ \ \ \ \ \ \ \ \ \ \ \ \ \ \ \ \ \ \ \ \ \ \  \ \  & \\ 
\Big\langle\big\{(\overline{x}_1,\ldots,\overline{x}_1),\ldots,(\overline{x}_r,\ldots, \overline{x}_r), (1,\overline{\sigma},\ldots,1), \ldots, (1,1,\ldots, \overline{\sigma}): \overline{\sigma}\in \overline{\Sigma}\big\} \Big \rangle, \ k>1& 
\end{matrix}\right.$$ is a quotient of $W_{\Sigma,k}$. To see this, consider the homomorphism $\pi_0:W_{\Sigma,k}\rightarrow W_{\Sigma,k}'$ induced by the following map on the set of generators of $W_{\Sigma,k}$: $$x_i \mapsto  (\overline{x}_i,\ldots, \overline{x}_i),\ y_{j\sigma}\mapsto \big(\overbrace{1,\ldots,1}^{j}, \overline{\sigma}, \overbrace{1,\ldots,1}^{k-j}\big)$$ for $1\leq i \leq r,\ 1\leq j \leq k, \ \sigma \in \Sigma.$ Observe that $\pi_0:W_{\Sigma,k}\rightarrow W_{\Sigma,k}'$ is well-defined, onto and for $1\leq j \leq k$, $\{1\}^{j}\times \llangle \overline{\Sigma}\rrangle\times \{1\}^{k-j}$ is a subgroup of $W_{\Sigma,k}'$. If $\mathsf{F}(\{x_1,\ldots,x_r\})=\llangle \mathcal{R}\cup \Sigma\rrangle$, or equivalently $Q=\llangle \overline{\Sigma} \rrangle$, then  $W_{\Sigma,k}'=Q^{k+1}$ and $\mathsf{Epi}(W_{\Sigma,k},Q^{k+1})\neq \emptyset$.

Now we use induction on $k\in \mathbb{N}$ to prove the implication  \begin{equation}\label{implication}\mathsf{Epi}\big(W_{\Sigma,k},Q^{k+1} \big) \neq \emptyset \Longrightarrow Q=\llangle \overline{\Sigma}\rrangle.\end{equation} First we prove (\ref{implication}) for $k=1$. Suppose there is an epimorphism $\phi_1:W_{\Sigma,1}\rightarrow{} Q\times Q$ and consider the commuting subgroups of $W_{\Sigma,1}$: $$\mathsf{H}_{11}:=\big \langle \{y_{1\sigma}:\sigma \in \Sigma\}\big \rangle, \ \mathsf{H}_{11}':=\big \langle \{\sigma^{-1}y_{1\sigma}:\sigma \in \Sigma\}\big \rangle$$ such that $W_{\Sigma,1}/\llangle \mathsf{H}_{11}\rrangle \cong  W_{\Sigma,1}/\llangle \mathsf{H}_{11}'\rrangle \cong  Q$.

For the rest of the proof of the lemma, fix a surjective homomorphism $\theta:Q\rightarrow F_2$ onto the free group of rank $2$. Let also $\pi_{\textup{L}}: Q\times Q\rightarrow Q$, $\pi_{\textup{R}}: Q\times Q\rightarrow Q$ be the projections onto the left and right direct factors respectively We will need the following claim.

\begin{claim}\label{claim0} $W_{\Sigma,1}=\llangle \mathsf{H}_{11},\mathsf{H}_{11}'\rrangle$.\end{claim}

\begin{proof}[Proof of Claim \ref{claim0}] First, let us observe that \begin{align*} \phi_1(\mathsf{H}_{11})&\subset \pi_{\textup{L}}(\phi_1(\mathsf{H}_{11}))\times \pi_{\textup{R}}(\phi_1(\mathsf{H}_{11}))\\  
\phi_1(\mathsf{H}_{11}')&\subset \pi_{\textup{L}}(\phi_1(\mathsf{H}_{11}'))\times \pi_{\textup{R}}(\phi_1(\mathsf{H}_{11}'))\end{align*} and note that $\pi_{\textup{L}}(\phi_1(\mathsf{H}_{11}))$ (resp. $\pi_{\textup{R}}(\phi_1(\mathsf{H}_{11}))$) centralizes $\pi_{\textup{L}}(\phi_1(\mathsf{H}_{11}'))$ (resp. $\pi_{\textup{R}}(\phi_1(\mathsf{H}_{11}'))$). Consider the subgroups of $Q$, \begin{align*}\mathsf{L}_1&:=\big\langle \pi_{\textup{L}}(\phi_1(\mathsf{H}_{11})), \pi_{\textup{L}}(\phi_1(\mathsf{H}_{11}'))\big\rangle\\ \mathsf{L}_2&:=\big\langle \pi_{\textup{R}}(\phi_1(\mathsf{H}_{11})), \pi_{\textup{R}}(\phi_1(\mathsf{H}_{11}'))\big\rangle.\end{align*}

We claim that both $\theta(\mathsf{L}_1)$ and $\theta(\mathsf{L}_2)$ are non-cyclic free subgroups of $F_2$. To see this, first observe that since $\phi_1$ and $\theta$ are surjective, we obtain the epimorphism $$W_{\Sigma,1}/\llangle \mathsf{H}_{11},\mathsf{H}_{11}'\rrangle \rightarrow{} Q/\llangle \mathsf{L}_1 \rrangle \times Q/\llangle \mathsf{L}_2 \rrangle \rightarrow{} F_2/\llangle \theta(\mathsf{L}_1) \rrangle \times F_2/\llangle \theta(\mathsf{L}_2) \rrangle.$$ If $\theta(L_i)$ is cyclic for some $i=1,2$, then $b_1(F_2/\llangle \theta(\mathsf{L}_i) \rrangle)\geq 1$. However, this is absurd since $$W_{\Sigma,1}/\llangle \mathsf{H}_{11},\mathsf{H}_{11}'\rrangle=\big\langle x_1,\ldots,x_r \ | \ \mathcal{R}\cup \Sigma \big\rangle$$ has zero first Betti number. Hence, both $\theta(\mathsf{L}_1),\theta(\mathsf{L}_2)$ are non-cyclic free groups. In particular, since $\theta(\pi_{\textup{L}}(\phi_1(\mathsf{H}_{11})))$ centralizes $\theta(\pi_{\textup{L}}(\phi_1(\mathsf{H}_{11}')))$, precisely one of these two groups is trivial and the other is free. By our assumption that centralizers of non-cyclic free subgroups of $Q$ are trivial, we conclude that precisely one of the commuting subgroups $\pi_{\textup{L}}(\phi_1(\mathsf{H}_{11}))$ and $\pi_{\textup{L}}(\phi_1(\mathsf{H}_{11}'))$ of $Q$ contains a non-cyclic free subgroup and the other is trivial. Similarly, we deduce that one of the groups $\pi_{\textup{R}}(\phi_1(\mathsf{H}_{11}))$ and $\pi_{\textup{R}}(\phi_1(\mathsf{H}_{11}'))$ contains a non-cyclic free subgroup and the other is trivial. Observe that $W_{\Sigma,1}/\llangle \mathsf{H}_{11}\rrangle \cong Q$, $W_{\Sigma,1}/\llangle \mathsf{H}_{11}'\rrangle \cong Q$ and there is no epimorphism of $Q$ onto $Q\times Q$ since $Q$ is a Hopfian group.
Thus, by the previous remarks, up to composing with the automorphism of $Q\times Q$ switching the two factors, there are subgroups $\mathsf{N}_1,\mathsf{N}_2$ of $Q$ such that $$\phi_1(\mathsf{H}_{11})=\{1\}\times \mathsf{N_1}, \ \phi_1(\mathsf{H}_{11}')=\mathsf{N}_2\times \{1\}.$$ 

Now consider the epimorphism $\overline{\phi}_1:W_{\Sigma,1}/\llangle \mathsf{H}_{11}\rrangle \rightarrow Q\times \big(Q/\llangle \mathsf{N}_1\rrangle\big)$, $$\overline{\phi}_1\big(g\llangle \mathsf{H}_{11}\rrangle\big)=\phi_1(g)\llangle \phi(\mathsf{H}_{11})\rrangle.$$ The homomorphism $\pi_{\textup{L}} \circ \overline{\phi}:W_{\Sigma,1}/\llangle \mathsf{H}_{11}\rrangle \rightarrow Q$, where $\pi_{\textup{L}}:Q\times \big( Q/\llangle \mathsf{N}_1\rrangle\big) \rightarrow Q$ is the projection onto the first factor, is onto and hence an isomorphism since $W_{\Sigma,1}/\llangle \mathsf{H}_{11}\rrangle \cong Q$ and $Q$ is Hopfian. In particular, $Q=\llangle \mathsf{N}_1\rrangle$, $\overline{\phi}$ is an isomorphism and $\textup{ker}\phi$ is a subgroup of $\llangle \mathsf{H}_{11}\rrangle$. Similarly, there is an epimorphism $$Q\cong W_{\Sigma,1}/\llangle \mathsf{H}_{11}'\rrangle \rightarrow{} \big(Q/\llangle \mathsf{N}_2\rrangle\big) \times Q\rightarrow{}  Q$$ and hence $Q=\llangle \mathsf{N}_2\rrangle$ and $\textup{ker}\phi$ is a subgroup of $\llangle \mathsf{H}_{11}\rrangle$. Since $\phi$ is onto and $\textup{ker}\phi$ is contained in $\llangle \mathsf{H}_{11},\mathsf{H}_{11}'\rrangle$, we have $W_{\Sigma,1}=\llangle \mathsf{H}_{11},\mathsf{H}_{11}'\rrangle$.\end{proof}

Now we complete the proof of (\ref{implication}) for $k=1$. By Claim \ref{claim0}, $W_{\Sigma,1}=\llangle  \mathsf{H}_{11},\mathsf{H}_{11}'\rrangle$, hence by using the given presentation of $W_{\Sigma,1}$ we have \begin{align*}\mathsf{F}\big(\{y_{1\sigma}:\sigma \in \Sigma\}\cup \{x_1,\ldots,x_r\}\big)&=\llangle[\big] \mathcal{R}\cup \{\sigma^{-1}y_{1\sigma}:\sigma \in \Sigma\}\cup  \{y_{1\sigma}:\sigma \in \Sigma\} \rrangle[\big]\\ &=\llangle[\big] \mathcal{R}\cup \Sigma \cup  \{y_{1\sigma}:\sigma \in \Sigma\} \rrangle[\big]\end{align*} which implies $\big \langle x_1,\ldots,x_r \ | \ \mathcal{R} \cup \Sigma \big \rangle \cong \{1\}$ and (\ref{implication}) follows for $k=1$.
\medskip

\noindent {\em The case where $k\geq 2$.} Now assume there is an epimorphism $\phi_k:W_{\Sigma,k}\rightarrow Q^{k+1}$. For $1\leq j \leq k$ consider the subgroup $\mathsf{H}_{kj}\subset W_{\Sigma, k}$: $$\mathsf{H}_{kj}:=\big \langle \{y_{j\sigma}:\sigma \in \Sigma\}\big\rangle, \ \mathsf{H}_{kj}':= \big\langle \{\sigma^{-1}y_{j\sigma}:\sigma \in \Sigma\}\big\rangle.$$

\begin{claim}\label{claim-k} $\phi_{k}(\mathsf{H}_{k1})$ is a subgroup of $\{1\}^{m}\times Q\times \{1\}^{k-m}$ for some $m=0,\ldots,k$. \end{claim}

\begin{proof} For $j=1,\ldots,k$ and $i=1,\ldots,k+1$, let $\mathsf{M}_{ij}$ (resp. $\mathsf{M}_{ij}'$) be the projection of $\phi_{k}(\mathsf{H}_{kj})$ onto the $i^{\textup{th}}$ direct factor of $Q^{k+1}$. Since $\mathsf{H}_{ks}'$ commutes with $\mathsf{H}_{ks}$, $\mathsf{H}_{kj}$ commutes with $\mathsf{H}_{ki}$ for $i\neq j$ and centralizers of non-cyclic free subgroups in $Q$ are trivial,  for every $s=1,\ldots,k$ at most one of the groups $\mathsf{M}_{1s},\ldots,\mathsf{M}_{(k+1)s}$ contains a non-cyclic free group. Indeed, if both $\mathsf{M}_{j_1 s}$ and $\mathsf{M}_{j_2s}$ contain non-cyclic free groups for some pair $j_1\neq j_2$, then $\mathsf{M}_{j_1r}=\mathsf{M}_{j_2r}=\{1\}$ for $r\neq s$ and $\mathsf{M}_{j_1s}'=\mathsf{M}_{j_2s}'=\{1\}$. This implies that $\phi_k(\langle \mathsf{H}_{ks}'\cup \bigcup_{r\neq s} \mathsf{H}_{kr}\rangle)$ projects trivially on the $j_1^{\textup{th}}$ and $j_2^{\textup{th}}$ direct factors of $Q^{k+1}$ and the quotient $Q\cong W_{\Sigma,k}/\llangle \mathsf{H}_{ks}'\cup \bigcup_{r\neq s} \mathsf{H}_{kr}\rrangle$ maps onto $Q\times Q$. However, this is impossible since $Q$ is a Hopfian group.

If the claim is not true, up to composing with an automorphism of $Q^{k+1}$ permuting the direct factors, we may assume that $\phi_k(\mathsf{H}_{k1})$ projects non-trivially on the first two direct factors of $Q^{k+1}$. Let us set $\mathsf{M}_{1}':=\langle \mathsf{M}_{11}',\mathsf{M}_{11},\ldots,\mathsf{M}_{1(k+1)}\rangle$ and $\mathsf{M}_{2}':=\langle \mathsf{M}_{21}', \mathsf{M}_{21}, \ldots,\mathsf{M}_{2(k+1)}\rangle$. Since $\phi_k$ and $\theta$ are surjective and $\phi(\langle \mathsf{H}_{k1}',\mathsf{H}_{k1},\ldots,\mathsf{H}_{kk}\rangle)$ is a subgroup of $\mathsf{M}_1'\times \mathsf{M}_2'\times Q^{k-1}$, there are epimorphisms $$W_{\Sigma,k}/\llangle \mathsf{H}_{k1}',\mathsf{H}_{k1},\ldots,\mathsf{H}_{kk}\rrangle \rightarrow{} Q/\llangle \mathsf{M}_1'\rrangle \times Q/\llangle \mathsf{M}_2'\rrangle  \rightarrow{} F_2/\llangle \theta(\mathsf{M}_1')\rrangle \times F_2/\llangle \theta(\mathsf{M}_2')\rrangle.$$ By assumption, $$W_{\Sigma,k}/\llangle \mathsf{H}_{k1}',\mathsf{H}_{k1},\ldots,\mathsf{H}_{kk}\rrangle\cong \big\langle x_1,\ldots,x_r \ | \ \mathcal{R}\cup \Sigma \big\rangle$$ has zero first Betti number, thus $b_1(F_2/\llangle \theta(\mathsf{M}_i')\rrangle )=0$ for $i=1,2$ and both $\theta(\mathsf{M}_1'),\theta(\mathsf{M}_2')$ contain a non-cyclic free group. Note that since $\mathsf{M}_{11}\neq \{1\}$ centralizes $\langle \mathsf{M}_{11}',\ldots,\mathsf{M}_{1(k+1)}\rangle\subset Q$, the latter group (and its image under $\theta$) cannot contain a non-cyclic free group. Since $\theta(\mathsf{M}_{11})$ centralizes $\langle \theta(\mathsf{M}_{11}'),\theta(\mathsf{M}_{12}),\ldots,\theta(\mathsf{M}_{1(k+1)})\rangle$ and $\theta(\mathsf{M}_{1}')$ is not cyclic, we deduce that $\theta(\mathsf{M}_{11})$ and $\mathsf{M}_{11}$ contain a non-cyclic free subgroup and hence $\mathsf{M}_{11}'=\mathsf{M}_{12}=\cdots=\mathsf{M}_{1(k+1)}=\{1\}$. Similarly, as $\mathsf{M}_{21}\neq \{1\}$ and $\theta(\mathsf{M}_2')$ is not cyclic, we deduce that $\mathsf{M}_{21}$ contains a non-cyclic free group. However, this contradicts our observation in the previous paragraph that at most one of the groups $\mathsf{M}_{11},\mathsf{M}_{21},\ldots, \mathsf{M}_{(k+1)1}$ can contain a non-cyclic free subgroup. The claim follows.\end{proof}

By Claim \ref{claim-k}, up to composing $\phi_k$ with an automorphism of $Q^{k+1}$ permuting the factors, we may assume $\phi_k(\mathsf{H}_{k1})=\mathsf{N}_{k1}\times \{1\}^{k}$. In particular, there is an epimorphism $$W_{\Sigma,k}/\llangle \mathsf{H}_{k1}\rrangle \rightarrow \big(Q/\llangle \mathsf{N}_{k1}\rrangle\big) \times Q^{k}\rightarrow Q^{k}.$$ Finally, observe that since $W_{\Sigma,k}/\llangle \mathsf{H}_{k1}\rrangle \cong W_{\Sigma, k-1}$, there is an epimorphimsm $W_{\Sigma,k-1}\rightarrow  Q^{k}$ and thus by the inductive step we have $Q=\llangle \overline{\Sigma}\rrangle$. This completes the proof of the induction and (\ref{implication}) follows.\end{proof}

\subsection{Undecidability}  For some background on decision problems we refer the reader to \cite{Miller}. We will need the following undecidability result for the construction of the recursive sequence of hyperbolic groups in Theorem \ref{mainthm2}.

\begin{theorem}\label{undecidability}\textup{(Miller \cite{Miller})} There exists a recursive sequence of finite subsets $(\Sigma_n)_{n\in \mathbb{N}}$ of the free group $\mathsf{F}(\{x,y\})$, of fixed cardinality, with the following properties:
\begin{enumerate}
\item for every $n\in \mathbb{N}$ the group $\langle x,y \ | \ \Sigma_n\rangle$ has zero first Betti number.
\item for every $n\in \mathbb{N}$ the group $\langle x,y \ |\ \Sigma_n\rangle$ is either trivial or infinite.
\item the set $\{n\in \mathbb{N}: \langle x,y \ |\ \Sigma_n\rangle\cong \{1\} \}$ is recursively enumerable but not recursive.\end{enumerate}\end{theorem}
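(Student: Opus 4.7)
The plan is to reduce the word problem in a finitely presented group $G_0$ with unsolvable word problem to the triviality problem for a recursive family of $2$-generator presentations, following the Adian--Rabin template and refining it to enforce conditions (1) and (2) simultaneously.

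By the Novikov--Boone theorem, there exists a finitely presented group with unsolvable word problem. After Higman--Neumann--Neumann embeddings together with a standard perfectification step (see Miller \cite{Miller}), we may assume $G_0 = \langle x, y \mid S_0\rangle$ is $2$-generated, perfect, and has unsolvable word problem. Fix a recursive enumeration $(v_n)_{n\in \mathbb{N}} \subset \mathsf{F}(\{x, y\})$ of words for which $\{n : v_n =_{G_0} 1\}$ is recursively enumerable but not recursive.

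Apply a Rabin-type algorithmic operator $T$, producing from $v_n$ a finite set $T(v_n) \subset \mathsf{F}(\{x, y\})$ of auxiliary relators, of cardinality independent of $n$, with the following key property: the group $H_n := \langle x, y \mid S_0 \cup T(v_n)\rangle$ is trivial when $v_n =_{G_0} 1$ and infinite when $v_n \neq_{G_0} 1$. The construction works via a cascade of identifications parametrized by $v_n$: when $v_n = 1$ the cascade collapses all generators, and when $v_n \neq 1$ the cascade stabilizes at an infinite quotient of $G_0$ (for instance, $G_0$ itself, or a fixed infinite quotient built into the construction). Setting $\Sigma_n := S_0 \cup T(v_n)$, the sequence $(\Sigma_n)_{n\in \mathbb{N}}$ is recursive by construction, and its cardinality can be made constant by padding with trivial relators.

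Property (1) then holds automatically: $H_n$ is a quotient of the perfect group $G_0$, hence perfect, so $b_1(H_n) = 0$. Property (2) is by design of the Rabin operator. Property (3) holds because triviality of $H_n$ is certifiable by a finite sequence of Tietze transformations reducing $\Sigma_n$ to the trivial presentation (so the triviality set is recursively enumerable), whereas recursiveness of this set would decide $\{n : v_n =_{G_0} 1\}$, contradicting the unsolvability of the word problem in $G_0$. The main technical obstacle is the explicit design of the Rabin operator $T(\cdot)$: one must engineer a collapse when $v_n = 1$ while guaranteeing that an infinite quotient survives when $v_n \neq 1$ --- the latter is delicate, since the naive approach of simply adding $v_n$ to the relators of $G_0$ could produce a finite group --- and this is the core of Miller's construction.
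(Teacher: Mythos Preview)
Your outline identifies the correct high-level strategy --- reduce triviality of $2$-generator presentations to the word problem in a fixed finitely presented group --- but as written it has a genuine gap: you never construct the operator $T$, and you explicitly concede that its design ``is the core of Miller's construction.'' Since the entire technical content of the theorem is precisely that construction, what you have offered is a restatement of what must be proved rather than a proof.

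For comparison, the paper's Remark immediately following the theorem spells out Miller's construction explicitly. One does \emph{not} begin from a perfect $2$-generated $G_0$ and pass to quotients. Instead, starting from an arbitrary finitely presented $\langle y_1,\ldots,y_s\mid \omega_1,\ldots,\omega_m\rangle$ with unsolvable word problem, one writes down specific words $a=y^{-1}x[y,x]$, $c=x[y,x]$ and $b_0,b_1,\ldots,b_s\in\mathsf F(\{x,y\})$, and for each test word $w$ sets
\[
\Sigma_w=\bigl\{b_0,\ \omega_1(b_1,\ldots,b_s),\ \ldots,\ \omega_m(b_1,\ldots,b_s),\ [w(b_1,\ldots,b_s),x]\,a^3c^{-3}x^{-1}c^3a^{-3}\bigr\}.
\]
The commutator-type form of the $b_i$ forces trivial abelianization directly, so no prior perfectification of the input group is needed for property~(1); and the explicit encoding is what guarantees that $\langle x,y\mid\Sigma_w\rangle$ is trivial exactly when $w\in\llangle\omega_1,\ldots,\omega_m\rrangle$ and otherwise infinite. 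Your perfectification step is a plausible alternative route to property~(1), but it does nothing for the point you yourself flag as delicate: ensuring infiniteness when $v_n\neq_{G_0}1$. In your framework every $H_n$ is a \emph{quotient} of the fixed $G_0$, and it is not clear how relators $T(v_n)$ built from the word $v_n$ can be harmless when $v_n\neq_{G_0}1$ yet collapse everything when $v_n=_{G_0}1$; the classical Adian--Rabin trick sidesteps this by adjoining auxiliary free generators rather than by quotienting a fixed group, and Miller's version encodes the input group into $\mathsf F(\{x,y\})$ via the $b_i$ rather than assuming it already sits there. Until you supply an explicit $T$ together with a verification of these two behaviours, the argument is incomplete.
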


\begin{rmk} \normalfont{The finite sets $(\Sigma_n)_{n\in \mathbb{N}}$ in Theorem \ref{undecidability} are obtained by applying \cite[Lem. 3.6]{Miller} to a finitely presented group $\big \langle y_1,\ldots,y_s \ | \ \omega_1,\ldots, \omega_m \big \rangle$ with unsolvable word problem. Let $a:=y^{-1}x[y,x], c:=x[y,x]$ and consider the words of $\mathsf{F}(\{x,y\})$, \begin{align*} b_0&:=a^{-2}x^{-2}axa^2c^{-2}x^{-1}c^{-1}xc^2\\ b_i&:=[a^{3+i}c^{-3-i},x], \ i=1,\ldots,s. \end{align*}
 For every $i$, write $\omega_i=\omega_i(y_1,\ldots,y_s)$ in reduced form and for a word $w=w(y_1,\ldots,y_s)$ in $\mathsf{F}(\{y_1,\ldots,y_s\})$ define the set $$\Sigma_{w}:=\Big\{b_0,\omega_1(b_1,\ldots,b_s),\ldots, \omega_m(b_1,\ldots,b_s), [w(b_1,\ldots,b_s),x]a^3c^{-3}x^{-1}c^3a^{-3}\Big\}.$$ By \cite[Lem. 3.6]{Miller} the presentation $\langle x,y \ | \ \Sigma_w\rangle$ has trivial abelianization and defines the trivial group if and only if $w\in \llangle \omega_1,\ldots, \omega_m \rrangle$.} \end{rmk}

Now we can prove Theorems \ref{mainthm2} and \ref{maincor}.

\begin{proof}[Proof of Theorem \ref{mainthm2} \& Theorem \ref{maincor}.] 

Let $Q=\langle X\ |\ \mathcal{R}\rangle$ be a finite presentation of $Q$. By assumption, we can choose an epimorphism $\pi_Q:\mathsf{F}(X) \rightarrow \mathsf{F}(\{a,b\})$ with $\textup{ker}\pi_Q=\llangle \mathcal{R}\cup \mathcal{R}' \rrangle$ for some finite subset $\mathcal{R}'\subset \mathsf{F}(X)$. Fix $\overline{a},\overline{b}\in \mathsf{F}(X)$ with $\pi_Q(\overline{a})=a$ and $\pi_Q(\overline{b})=b$. Let $(\Sigma_n)_{n\in \mathbb{N}}$, $\Sigma_n \subset \mathsf{F}(\{a,b\})$, be a recursive sequence of finite subsets furnished by Theorem \ref{undecidability} and consider the recursive sequence $(\Sigma_n')_{n\in \mathbb{N}}$ of finite subsets of $\mathsf{F}(X)$ defined as follows: $$\Sigma_n':=\mathcal{R}'\cup \big\{w(\overline{a},\overline{b}):w(a,b)\in \Sigma_n\big\}.$$ Note that for every $n\in \mathbb{N}$, $\pi_Q(\Sigma_n')=\Sigma_n$ and $\langle X\ |\ \mathcal{R}\cup \Sigma_n'\rangle \cong \langle a,b \ | \ \Sigma_n\rangle$. In particular, by Theorem \ref{undecidability} (iii), the set $\big\{n\in \mathbb{N}: \langle X\ |\ \mathcal{R}\cup \Sigma_n'\rangle \cong \{1\}\big\}$ is recursively enumerable but not recursive.

Fix an integer $k\geq 1$ and consider the group $W_{\Sigma_n'}$ defined by the presentation: $$W_{\Sigma_n'}:=\Big \langle x_1,\ldots,x_r, \{y_{1\sigma}\}_{\sigma \in \Sigma_n'}, \ldots, \{y_{k\sigma}\}_{\sigma \in \Sigma_n'}  \ \big| \ \mathcal{R}, [\sigma^{-1}y_{s\sigma}, y_{s\sigma'}], [y_{s\sigma},y_{t\sigma'}], \sigma,\sigma'\in \Sigma_n', s\neq t \Big \rangle.$$ 

Clearly, since $(\Sigma_n')_{n\in \mathbb{N}}$ is recursive, the sequence of presentations $(W_{\Sigma_n'})_{n\in \mathbb{N}}$ is also recursive. Since $Q$ is torsion-free Hopfian \cite{Sela} and $\langle X\ | \ \mathcal{R}\cup \Sigma_n'\rangle$ has zero first Betti number, by Lemma \ref{mainlemma}, $\mathsf{Epi}(W_{\Sigma_n'},Q^{k+1})\neq \emptyset$  if and only if $\langle X\ |\ \mathcal{R}\cup \Sigma_n'\rangle\cong \{1\}$. In particular, the sets $$\big\{n\in \mathbb{N}: \mathsf{Epi}(W_{\Sigma_n'},Q^{k+1})\neq \emptyset \big\}=\big\{n\in \mathbb{N}: \langle X\ |\ \mathcal{R}\cup \Sigma_n'\rangle \cong \{1\}\big\}=\big\{n\in \mathbb{N}: \langle a,b \ | \ \Sigma_n\rangle \cong \{1\}\big\}$$ are all recursively enumerable but not recursive. This completes the proof of Theorem \ref{maincor}.

Now we also assume that $Q$ is virtually residually solvable. By applying the algorithm of Proposition \ref{Rips0} with input the presentation of $W_{\Sigma_n'}$, we obtain the finitely presented group \begin{align}\label{Gamma-pres}\Gamma_{n}&=\big \langle  x,y,x_1,\ldots,x_r, \{y_{1\sigma}\}_{\sigma \in \Sigma_n'}, \ldots, \{y_{k\sigma}\}_{\sigma \in \Sigma_n'}\ \big | \ \mathcal{R}_n \big \rangle\\ \nonumber |\mathcal{R}_n|&=5+|\mathcal{R}|+4r+4k|\Sigma_n'|+ \frac{k^2+k}{2}|\Sigma_n'|^2,\end{align} satisfying the $C'(\frac{1}{6})$ small cancellation condition, and an epimorphism $\pi_{n}:\Gamma_{n}\rightarrow W_{\Sigma_n'}$, with kernel $\langle x,y\rangle$, such that any group homomorphism \hbox{$\rho:\Gamma_{n}\rightarrow Q^{k+1}$} factors through $\pi_{n}$. Clearly, since the sequence of presentations $(W_{\Sigma_n'})_{n\in \mathbb{N}}$ is recursive, the sequence $(\Gamma_{n})_{n\in \mathbb{N}}$ in (\ref{Gamma-pres}) is also recursive. In particular, $\mathsf{Epi}(\Gamma_{n},Q^{k+1})\neq \emptyset$ if and only if $\mathsf{Epi}(W_{\Sigma_{n}'},Q^{k+1})\neq \emptyset$. Since $Q$ is Hopfian and $\langle X\ | \ \mathcal{R}\cup \Sigma_n'\rangle$ has zero first Betti number, by Lemma \ref{mainlemma}, we have $$\big\{n\in \mathbb{N}: \mathsf{Epi}(\Gamma_{n},Q^{k+1})\neq \emptyset \big\}=\big\{n\in \mathbb{N}: \langle X\ |\ \mathcal{R}\cup \Sigma_n'\rangle \cong \{1\}\big\}=\big\{n\in \mathbb{N}: \langle a,b \ | \ \Sigma_n\rangle \cong \{1\}\big\}.$$ All previous sets are recursively enumerable but not recursive. Therefore, the sequence $(\Gamma_{n})_{n\in \mathbb{N}}$ satisfies the conclusion of the Theorem \ref{mainthm2}.\end{proof}

\subsection{Undecidability of detecting epimorphisms onto fiber products}
In this subsection, we prove Theorem \ref{fiberprod}. Recall that a group $H$ is called freely indecomposable if it does not split as a free product of two non-trivial groups. We will need the following observation.

\begin{lemma}\label{freeprod} Let $H$ be a freely indecomposable group and $h\in H$. If the amalgam $H\ast_{h=h}H$ is isomophic to a non-trivial free product $G_1\ast G_2$, then $h=1$.\end{lemma}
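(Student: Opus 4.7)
The plan is to argue by contradiction: assume $h \neq 1$ and reach a contradiction by analyzing the two canonical copies $H_1, H_2$ of $H$ as subgroups of $G_1 \ast G_2$ via Kurosh's subgroup theorem. Here $\Gamma := H \ast_{h=h} H$ is generated by $H_1 \cup H_2$, and by construction $h \in H_1 \cap H_2$.

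First I would apply Kurosh's subgroup theorem to each $H_i \leq G_1 \ast G_2$: since $H_i \cong H$ is freely indecomposable, the Kurosh free-product decomposition of $H_i$ must degenerate to a single factor, so either $H_i$ is conjugate into one of the free factors $G_1, G_2$, or $H_i$ is infinite cyclic (the rank-one free Kurosh part, not conjugate into any factor). This yields two cases.

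In the first case, both $H_1 \subseteq g_1 G_{i_1} g_1^{-1}$ and $H_2 \subseteq g_2 G_{i_2} g_2^{-1}$. I would then invoke the standard fact that in a nontrivial free product, two conjugates of factor subgroups either coincide or intersect trivially (immediate from the triviality of edge stabilizers in the Bass-Serre tree of $G_1 \ast G_2$). If these conjugates coincide in a common subgroup $V$, then $\Gamma = \langle H_1, H_2 \rangle \subseteq V$, contradicting the fact that $V$ is a proper subgroup of the nontrivial free product $\Gamma$ (any reduced word alternating between nontrivial elements of $G_1$ and $G_2$ lies outside every single vertex stabilizer). Otherwise the two conjugates are distinct, so $H_1 \cap H_2 = \{1\}$ and hence $h = 1$.

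In the remaining case, $H \cong \mathbb{Z}$. Writing $H = \langle t \rangle$ and $h = t^n$ with $n \geq 1$ (assuming $h \neq 1$), the amalgam admits the presentation $\Gamma \cong \langle a, b \mid a^n = b^n \rangle$. For $n = 1$ this is $\mathbb{Z}$, which is freely indecomposable and thus cannot equal a nontrivial free product. For $n \geq 2$, the element $a^n = b^n$ is central in $\Gamma$, and it is nontrivial in $\Gamma$ by the normal form theorem for amalgams (the amalgamated subgroup embeds); this contradicts the well-known fact that nontrivial free products have trivial center. Hence $h = 1$.

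The main obstacle is recognizing the Kurosh dichotomy cleanly and, in the cyclic subcase, recalling the triviality of the center of any nontrivial free product; once these ingredients are in place the remaining combinatorial verifications are routine.
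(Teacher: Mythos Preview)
Your proof is correct and follows the same Kurosh-based strategy as the paper. The only minor differences are that the paper rules out both copies landing in conjugates of the \emph{same} factor via a normal-closure argument (if both lay in conjugates of $G_i$ then $G_1\ast G_2=\llangle\psi(H_1),\psi(H_2)\rrangle\subset\llangle G_i\rrangle$, forcing $G_{3-i}=1$) rather than your equal-or-disjoint dichotomy for vertex stabilizers, and the paper does not treat the case $H\cong\mathbb{Z}$ separately---either reading ``freely indecomposable'' as excluding $\mathbb{Z}$ or relying on the fact that in its application $H$ is one-ended---which you handle cleanly via the center.
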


\begin{proof} Suppose there is an isomorphism $\psi: H_1\ast_{h_1=h_2}H_2\rightarrow G_1\ast G_2$, where $H_i$ (resp. $h_i$) is a copy of $H$ (resp. $h$). Since $H_i$ is freely indecomposable, by the Kurosh subgroup theorem, up to conjugating $\psi$ by an element of $G_1\ast G_2$, we may assume that $\psi(H_1)\subset G_i$ and $\psi(H_2)\subset gG_j g^{-1}$ for some $g\in G_1\ast G_2$. Note that necessarily $i\neq j$, since $\llangle \psi(H_1),\psi(H_2)\rrangle=G_1\ast G_2$ and $(G_1\ast G_2)/\llangle G_i \rrangle\cong G_{3-i}$ for $i\in \{1,2\}$. It follows that $i\neq j$ and $G_i \cap gG_{j}g^{-1}$ contains $\psi(H_1\cap H_2)=\langle \psi(h_1)\rangle=\langle \psi(h_2)\rangle$. Since $G_i\cap gG_jg^{-1}=\{1\}$ and $\psi$ is injective it follows that $h=1$.\end{proof}

\begin{proof}[Proof of Theorem \ref{fiberprod}] Let $G=\big \langle x_1,\ldots,x_r \ | \ \mathcal{S}\big\rangle$ be a group of type $\mathcal{F}_3$ with unsolvable word problem and no non-trivial finite quotients. We consider the following presentation of the free product $G\ast \overline{G}=\big \langle x_1,\ldots,x_{2r} \ | \ \mathcal{S}, \overline{\mathcal{S}}\big\rangle$, where $\overline{G}$ is a copy of $G$, $x_{i+r}$ is a copy of $x_i$ for every $1\leq i \leq r$, and $\overline{\mathcal{S}}$ is a copy of $\mathcal{S}$ seen as a subset of $\mathsf{F}(\{x_{r+1},\ldots,x_{2r}\})$.

Fix a short exact sequence $1\rightarrow N \rightarrow \Gamma \overset{\pi}{\rightarrow} W\rightarrow 1$, where $\Gamma$ is torsion-free, linear and hyperbolic. We may choose a presentation of $\Gamma$ of the form (see the discussion in \cite{BBMS}),

 \begin{equation*} \Gamma=\scalebox{2.2}{\Bigg \langle}\!\!\!\begin{matrix}[1.25]
 x_1,\ldots, x_{2r}\\
y_1,\ldots,y_{k}\\ t
 \end{matrix}  \ \scalebox{2.1}{\Bigg{|}} \ \begin{matrix}[1.25] A_{s}(y_1,\ldots,y_k), s \in J_1\\ x_{i}^{\varepsilon}y_\ell x_{i}^{-\varepsilon}B_{i\ell \varepsilon}(y_1,\ldots,y_k)\ \ \\ 
V_{j}\big(x_1,\ldots,x_{2r}, y_1,\ldots,y_k \big),\ j \in J_2\\
i=1,\ldots,2r,\ \ell=1\ldots,k, \ \varepsilon=\pm 1\\
\vspace{0.001cm} \end{matrix} \scalebox{2.1}{\Bigg \rangle}\end{equation*} where $\{V_{j}\}_{j\in J}$ are words in $\mathsf{F}(\{x_1,\ldots,x_{2r},y_1,\ldots,y_k\})$, $A_{s},B_{i\ell \varepsilon}\in \mathsf{F}(\{y_1\ldots,y_k\})$ such that $\mathcal{S}\cup \overline{\mathcal{S}}=\big\{\mathsf{V}_j(x_1,\ldots,x_{2r},1,\ldots,1): j\in J\big\}$.

For every word $w=w(x_1,\ldots,x_r)\in \mathsf{F}(\{x_1,\ldots,x_r\})$ consider the word in $\mathsf{F}(\{x_1,\ldots,x_{2r}\})$, \begin{align}\label{widetilde-w}\widetilde{w}:=w(x_1,\ldots,x_r)w(x_{r+1},\ldots,x_{2r})^{-1}\end{align} and the finitely presented group

\begin{equation}\label{presentation-G} \mathcal{G}_w:=\scalebox{2.9}{\Bigg \langle} \! \!\!  \begin{matrix}[1.25]
 x_1,\ldots, x_{2r} \ \ \\
y_1^{\textup{L}},\ldots, y_k^{\textup{L}}\ \ \\
y_1^{\textup{R}},\ldots,y_k^{\textup{R}}\ \ \\ t \end{matrix}   \scalebox{2.8}{\Bigg{|}}  \begin{matrix}[1.25] \big[\widetilde{w}t,t\big]\\
\big[t,y_\ell^{\textup{L}}\big]\\ \big[\widetilde{w}t,y_\ell^{\textup{R}}\big]\\
\big[y_\ell^{\textup{R}},y_m^{\textup{L}}\big]\\
\ell,m=1,\ldots,k\\
\vspace{0.001cm}\end{matrix}\ \  \ \begin{matrix}[1.25] 
x_{i}^{\varepsilon}y_\ell^{\textup{L}}x_{i}^{-\varepsilon}B_{i\ell \varepsilon}(y_1^{\textup{L}},\ldots,y_k^{\textup{L}})\\
  x_{i}^{\varepsilon}y_\ell^{\textup{R}}x_{i}^{-\varepsilon}B_{i\ell\varepsilon}(y_1^{\textup{R}},\ldots,y_k^{\textup{R}})\\

A_{s}(y_1^{\textup{L}},\ldots,y_k^{\textup{L}}), A_{s}(y_1^{\textup{R}},\ldots,y_k^{\textup{R}})\\ 
V_{j}\big(x_1,\ldots,x_r, y_1^{\textup{L}}y_1^{\textup{R}},\ldots,y_k^{\textup{L}}y_k^{\textup{R}} \big)\\ 
 s\in J_1,\ j\in J_2,\ i=1,\ldots,2r,\ \varepsilon=\pm 1\\
\vspace{0.001cm} \end{matrix}\!\! \scalebox{2.9}{\Bigg \rangle}. \end{equation}

Let $P:=\big\{(\gamma_1,\gamma_2)\in \Gamma \times \Gamma: \pi(\gamma_1)=\pi(\gamma_2)\big\}$ be the fiber product of $\Gamma$ with respect to $N$ and $\pi_{\textup{L}}:P\rightarrow \Gamma$, $\pi_{\textup{R}}:P\rightarrow \Gamma$ the projections onto $\Gamma$. Denote by $\llangle \mathcal{S}\rrangle$ the normal closure of $\mathcal{S}$ in $\mathsf{F}(\{x_1,\ldots,x_r\})$. We have the following claim.

\begin{claim}\label{epi-prod} Let $w\in \mathsf{F}(\{x_1,\ldots,x_r\})$. Then $\mathsf{Epi}(\mathcal{G}_w,P)\neq \emptyset$ if and only if $w\in \llangle \mathcal{S}\rrangle$.\end{claim}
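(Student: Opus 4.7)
The plan is to prove the equivalence in two steps. For the forward direction, assume $w \in \llangle\mathcal{S}\rrangle$. Then both $w(x_1,\ldots,x_r)$ and $w(x_{r+1},\ldots,x_{2r})$ represent the identity in $G\ast\overline{G}$, so $\widetilde{w}$ lies in $\ker\pi = N$. I would construct an explicit epimorphism $\phi:\mathcal{G}_w\to P$ by setting $\phi(x_i)=(x_i,x_i)$, $\phi(y^L_\ell)=(y_\ell,1)$, $\phi(y^R_\ell)=(1,y_\ell)$, and $\phi(t)$ to be an element of $P$ built from $t$ and $\widetilde{w}$ so that every defining relation of $\mathcal{G}_w$ is satisfied; the membership $\widetilde{w}\in N$ is essential so that the two coordinates of $\phi(t)$ have equal images under $\pi$. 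Each relation is then verified coordinate-wise in $\Gamma\times\Gamma$: the cross-commutators $[t,y^L_\ell]$, $[\widetilde{w}t,y^R_\ell]$, and $[y^L_\ell,y^R_m]$ vanish because one coordinate of the bracket is trivial; the $A_s$ and $B_{i\ell\varepsilon}$ relations lift from their truth in $\Gamma$; and $V_j$ maps to the diagonal of $\Gamma\times\Gamma$ since $\phi(y^L_\ell y^R_\ell)=(y_\ell,y_\ell)$. Surjectivity uses that $P$ is generated by the diagonal copy of $\Gamma$ together with $\{1\}\times N$.

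For the reverse direction, suppose $\phi:\mathcal{G}_w\to P$ is an epimorphism. Composing with the projection $P\to G\ast\overline{G}$, defined by $(a,b)\mapsto \pi(a)=\pi(b)$, yields a surjection $\bar\phi:\mathcal{G}_w\to G\ast\overline{G}$. I would first argue that $\bar\phi(y^L_\ell)=\bar\phi(y^R_\ell)=1$ for all $\ell$: the relations $[y^L_\ell,y^R_m]=1$ in $\mathcal{G}_w$ force these images to commute in the torsion-free free product $G\ast\overline{G}$, constraining them into common cyclic subgroups; combined with the $A_s,B_{i\ell\varepsilon}$ relations (which mirror the structure of $N\leq\Gamma$) and the hypothesis that $G$ has no non-trivial finite quotients, one deduces their triviality. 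Then $\bar\phi$ factors through $\mathcal{G}_w/\llangle y^L_\ell, y^R_\ell\rrangle$, and a direct computation shows this quotient has the presentation $\langle x_1,\ldots,x_{2r},t \mid \mathcal{S}, \overline{\mathcal{S}}, [\widetilde{w},t]\rangle$, which, when $\widetilde{w}$ has infinite order in $G\ast\overline{G}$, is the amalgamated product $(G\ast\overline{G})\ast_{\langle\widetilde{w}\rangle}(\langle\widetilde{w}\rangle\times\mathbb{Z})$. Applying Lemma \ref{freeprod}, together with the observation that $G$ is one-ended hence freely indecomposable, rules out the case $\widetilde{w}\neq 1$ and forces $\widetilde{w}$ to be trivial in $G\ast\overline{G}$, equivalently $w\in\llangle\mathcal{S}\rrangle$.

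The principal difficulty is the factoring step in the reverse direction, where one must leverage the commutation constraints built into $\mathcal{G}_w$ and the rigidity of free-product decompositions in $G\ast\overline{G}$ to pin down that $\bar\phi$ kills the $y^L,y^R$ generators. After that, Lemma \ref{freeprod} supplies the rigidity needed to conclude $\widetilde{w}=1$ from the resulting amalgam presentation.
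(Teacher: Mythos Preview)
Your forward direction matches the paper's construction and is fine.

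The reverse direction has a genuine gap. By passing immediately to the composite $\bar\phi:\mathcal{G}_w\to P\to G\ast\overline{G}$ you discard exactly the information needed to constrain $w$. Even granting that $\bar\phi$ kills every $y_\ell^{\textup{L}},y_\ell^{\textup{R}}$ (which you have not really established---commutation of the normal images in $G\ast\overline{G}$ forces at most one of $\bar\phi(N^{\textup{L}}),\bar\phi(N^{\textup{R}})$ to be nontrivial, not both to vanish), the resulting surjection
\[
\big\langle x_1,\ldots,x_{2r},t \ \big|\ \mathcal{S},\overline{\mathcal{S}},[\widetilde{w},t]\big\rangle \rightarrow G\ast\overline{G}
\]
carries no information about $w$: the assignment $x_i\mapsto x_i$, $t\mapsto 1$ is always such a surjection, for every $w$. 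Moreover, Lemma~\ref{freeprod} concerns amalgams of the shape $H\ast_{h=h}H$ with $H$ freely indecomposable; the HNN-type group you produce, $(G\ast\overline{G})\ast_{\langle\widetilde{w}\rangle}(\langle\widetilde{w}\rangle\times\mathbb{Z})$, is not of that form, so the lemma does not apply.

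The paper does not collapse to $G\ast\overline{G}$. It works inside $P\subset\Gamma\times\Gamma$, uses centralizer rigidity in the hyperbolic group $\Gamma$ to force $\varphi(\mathsf{H}_{\textup{L}})\subset N\times\{1\}$ and $\varphi(\mathsf{H}_{\textup{R}})\subset\{1\}\times N$ for the enlarged subgroups $\mathsf{H}_{\textup{L}}=\langle N^{\textup{L}},\widetilde{w}t\rangle$, $\mathsf{H}_{\textup{R}}=\langle N^{\textup{R}},t\rangle$, and then invokes Hopficity of $\Gamma$ to upgrade the induced surjection $\mathcal{G}_w/\llangle\mathsf{H}_{\textup{L}},\mathsf{H}_{\textup{R}}\rrangle\to P/(N\times N)$ to an \emph{isomorphism}. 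Only at that point does one obtain $G\ast_{w=w}G\cong G\ast G$, and now Lemma~\ref{freeprod} applies with $H=G$ (one-ended, hence freely indecomposable) to conclude $w=_G1$. The Hopficity step---turning a surjection into an isomorphism---is precisely what your projection to $G\ast\overline{G}$ cannot recover.
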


\begin{proof} Let $\overline{x}_i, \overline{y}_j, w', \widetilde{w}'\in \Gamma$ be the images of $x_i,y_j, w$ and $\widetilde{w}$ in $\Gamma$ respectively. If $w\in \llangle \mathcal{S}\rrangle$, then clearly $w',\widetilde{w}'\in N$. The map $x_{i}\mapsto (\overline{x}_i,\overline{x}_i)$, $y_j^{\textup{L}}\mapsto (\overline{y}_j,1)$, $y_j^{\textup{R}}\mapsto (1,\overline{y}_j)$ and $t\mapsto(1, (\widetilde{w}')^{-1})\in \{1\}\times N$ induces a well-defined epimorphism from $\mathcal{G}_w$ onto $P$.

Now let us assume there exists an epimorphism $\varphi:\mathcal{G}_w\rightarrow P$. By the definition of the presentation of $\mathcal{G}_w$, $N^{\textup{L}}:=\big\langle y_1^{\textup{L}},\ldots,y_k^{\textup{L}}\big\rangle$ and $N^{\textup{R}}:=\big\langle y_1^{\textup{R}},\ldots,y_k^{\textup{R}}\big\rangle$ are normal commuting subgroups of $\mathcal{G}_w$. First, note that
$\mathcal{G}_{w}/\langle N^{\textup{L}},N^{\textup{R}}\rangle\cong \big \langle x_1,\ldots,x_{2r},t \ |\ [\widetilde{w}t,t], \mathcal{S},\overline{\mathcal{S}}\big\rangle$ is a quotient of $G\ast G\ast \mathbb{Z}$. Since $G$ has no non-trivial finite quotients and $P$ is residually finite and non-cyclic, it follows that at least one of the groups $\varphi(N^{\textup{L}})$ or $\varphi(N^{\textup{R}})$ is non-trivial. 

Without loss of generality, we may assume that $\varphi(N^{\textup{L}})\neq \{1\}$. Up to composing $\varphi$ with the automorphism of $P$ switching the factors we may assume that $\pi_{\textup{L}}(\varphi(N^{\textup{L}}))$ is a non-elementary normal subgroup of $\Gamma$. Consider the commuting subgroups of $\mathcal{G}_w$, $$\mathsf{H}_{\textup{L}}=\big\langle N^{\textup{L}}, \widetilde{w}t\big\rangle, \ \mathsf{H}_{\textup{R}}=\big\langle N^{\textup{R}}, t\big\rangle$$ and observe that $\mathcal{G}_w/\llangle \mathsf{H}_{\textup{L}}\rrangle\cong \Gamma$ and $\mathcal{G}_w/\llangle \mathsf{H}_{\textup{R}}\rrangle\cong \Gamma$. Since $\pi_{\textup{L}}(\varphi(\mathsf{H}_{\textup{R}}))$ centralizes $\pi_{\textup{L}}(\varphi(N^{\textup{L}}))$, it follows that $\varphi(\mathsf{H}_{\textup{R}})=\{1\}\times N_2$ for some subgroup $N_2$ of $N$. In addition, since $N$ is normal in $\Gamma$, $\llangle \varphi(\mathsf{H}_{\textup{R}})\rrangle\subset \{1\}\times N$. Therefore, as $\varphi$ is onto, we obtain the well-defined epimorphism $$\varphi^{\textup{R}}:\mathcal{G}_w/\llangle \mathsf{H}_{\textup{R}}\rrangle \rightarrow{} P/\llangle \varphi(\mathsf{H}_{\textup{R}})\rrangle \rightarrow{} P/\big(\{1\}\times N\big),$$ $\varphi^{\textup{R}}(g\llangle \mathsf{H}_{\textup{R}}\rrangle)=\varphi(g)(1\times N), \ g \in \mathcal{G}_w$. Since $\Gamma$ is torsion-free, Hopfian and $P/(\{1\}\times N) \cong \Gamma$, we conclude that $\llangle \varphi(\mathsf{H}_{\textup{R}})\rrangle=\{1\}\times N$, $\varphi^{\textup{R}}$ is an isomorphism and $\textup{ker}\varphi\subset \llangle \mathsf{H}_{\textup{R}}\rrangle$.

In particular, $N_2$ is a non-trivial subgroup of $N$. First, let us verify that $N_2$ is non-cyclic. Indeed, if $N_2$ were cyclic, as $\pi_{\textup{R}}(\varphi(N^{\textup{R}}))$ is a normal subgroup of $\Gamma$ contained in $N_2$ (since $\varphi(N^{\textup{R}})\subset \varphi(\mathsf{H}_{\textup{R}})=\{1\}\times N_2$), we would have $\varphi(N^{\textup{R}})\subset N \times \{1\}$. In addition, the projection $\pi_{\textup{R}}(\varphi(N^{\textup{L}}))$ is a normal subgroup of $\Gamma$ centralizing $N_2$ and hence has to be trivial. This shows that $\varphi(\langle N^{\textup{L}},N^{\textup{R}}\rangle)$ is contained in $N\times \{1\}$, thus, since $\varphi$ is onto, we obtain an epimorphism $\mathcal{G}_w/\langle N^{\textup{L}},N^{\textup{R}}\rangle\rightarrow P/(N\times \{1\})\cong \Gamma$. However, this contradicts the fact that $\mathcal{G}_w/\langle N^{\textup{L}},N^{\textup{R}}\rangle$ is a quotient of $G\ast G\ast \mathbb{Z}$ and $G$ has no non-trivial finite quotients, while $\Gamma$ is residually finite and non-cyclic.

It follows that $N_2$ is a non-elementary subgroup of $\Gamma$. Since $\pi_{\textup{R}}(\varphi(\mathsf{H}_{\textup{L}}))$ centralizes $N_2$, $\varphi(\mathsf{H}_{\textup{L}})=N_1\times \{1\}$ for some subgroup $N_1$ of $N$. By arguing as previously, we obtain an epimorphism $\Gamma\cong \mathcal{G}_w/\llangle \mathsf{H}_{\textup{L}}\rrangle \rightarrow{} P/\llangle\varphi(\mathsf{H}_{\textup{L}})\rrangle \rightarrow{} P/(N\times \{1\})\cong \Gamma$. As $\Gamma$ is a Hopfian group, it follows that $\llangle \varphi(\mathsf{H}_{\textup{L}})\rrangle=N\times \{1\}$ and also $\textup{ker}\varphi\subset \llangle \mathsf{H}_{\textup{L}}\rrangle$.

Therefore, the normal subgroup $\llangle \mathsf{H}_{\textup{L}},\mathsf{H}_{\textup{R}}\rrangle$ of $\mathcal{G}_w$ contains $\textup{ker}\varphi$ and $\varphi(\llangle \mathsf{H}_{\textup{L}},\mathsf{H}_{\textup{R}}\rrangle)=N\times N$. Putting all this together, the induced homomorphism $\varphi':\mathcal{G}_w/\llangle \mathsf{H}_{\textup{L}},\mathsf{H}_{\textup{R}}\rrangle\rightarrow P/(N\times N)$, $$\varphi'\big(g\llangle \mathsf{H}_{\textup{L}},\mathsf{H}_{\textup{R}}\rrangle\big)=\varphi(g)(N\times N), \ g\in \mathcal{G}_w$$ is an isomorphism. By the definition of the presentation of $\mathcal{G}_w$ and the word $\widetilde{w}$ in (\ref{presentation-G}) and (\ref{widetilde-w}) respectively, $\mathcal{G}_w/\llangle \mathsf{H}_{\textup{L}},\mathsf{H}_{\textup{R}}\rrangle= (G\ast \overline{G})/\llangle \widetilde{w}\rrangle\cong G\ast_{w=w}G$. Since $P/(N\times N)\cong G\ast G$, we conclude that $G\ast_{w=w}G\cong G\ast G$. By Lemma \ref{freeprod},  it follows that $w=_G1$ and the claim follows.\end{proof}

Now we conclude the proof of the theorem. For every word $w\in \mathsf{F}(\{x_1,\ldots,x_r\})$, given the presentation $\mathcal{G}_{w}$, since $P\subset \Gamma \times \Gamma$ is virtually residually nilpotent \cite{Platonov}, the algorithm of Proposition \ref{Rips0} ouputs a torsion-free hyperbolic group $\Delta_w$, satisfying the $C'(\frac{1}{6})$ small cancellation condition, $$\Delta_w=\big\langle Y \  | \ \mathcal{S}_w\big \rangle$$ with $|Y|=3+2k+2r$, $|\mathcal{S}_w|=10+10k+8r+8kr+k^2+2|J_1|+|J_2|$ and an epimorphism $\pi_w:\Delta_w\rightarrow \mathcal{G}_w$, with the property that any homomorphism $\Delta_w\rightarrow P$ factors through $\pi_w$ and a homomorphism $\mathcal{G}_w\rightarrow P$. By Claim \ref{epi-prod} and the choice of $\Delta_w$ we conclude that $$\big\{w\in \mathsf{F}(\{x_1,\ldots,x_r\}): \mathsf{Epi}(\Delta_w,P)\neq \emptyset\big\}=\big\{w\in \mathsf{F}(\{x_1,\ldots,x_r\}): \mathsf{Epi}(\mathcal{G}_w,P)\neq \emptyset\big\}=\llangle \mathcal{S}\rrangle.$$ Clearly, $\llangle \mathcal{S}\rrangle$ is a recursively enumerable subset of $\mathsf{F}(\{x_1,\ldots,x_r\})$ which is not recursive since $G$ has unsolvable word problem. This concludes the proof of the theorem.\end{proof}

\bibliographystyle{siam}

\bibliography{biblio.bib}

\end{document}